\theoremstyle{plain}
\newtheorem{theorem}                 {\bf Theorem}      [chapter]
\newtheorem{proposition}  [theorem]  {Proposition}
\theoremstyle{definition}
\newtheorem{definition}   [theorem]  {Definition}
\newtheorem{remark}       [theorem]  {Remark}
\numberwithin{equation}{chapter}
\def \rn{{\mathbb R}}
\def \sn{{\mathbb S}}
\def \F{\mathcal F}
\def\nab#1#2{\hbox{$\nabla$\kern -.3em\lower 1.0 ex
		\hbox{$#1$}\kern -.1 em {$#2$}}}
\def \g{\mathfrak{g}}
\def \h{\mathfrak{h}}
\def \k{\mathfrak{k}}
\def \m{\mathfrak{m}}
\def \SL2{\widetilde{\text{\bf SL}}_{2}(\rn)}
\def \SO#1{\text{\bf SO}(#1)}
\def \U#1{\text{\bf U}(#1)}
\def \SU#1{\text{\bf SU}(#1)}
\def \su#1{\mathfrak{su}(#1)}
\DeclareMathOperator{\Div}{div}
\DeclareMathOperator{\grad}{grad}
\DeclareMathOperator{\trace}{trace}
\authors{Kexing Chen}
\begin{document}


\large 

\frontcover 

\thispagestyle{empty}
\centerline {\bf\Large Abstract}
\vskip1cm

We investigate the $8$-dimensional Riemannian Lie groups $G^8$, carrying a left-invariant, conformal and minimal foliation $\F$, with leaves diffeomorphic to the subgroup $\SU 2\times\SU 2$ of $G^8$.  Such groups have been classified by E. Ghandour, S. Gudmundsson and T. Turner in their recent work \cite{Tur}.  They show that these $8$-dimensional Lie groups form a real  $13$-dimensional family.
\smallskip 

For each left-invariant Hermitian structure $J_\mathcal{V}$ on $\SU 2\times\SU 2$, we extend this to an almost Hermitian structures $J$ on $G$ adapted to the foliation $\F$ i.e. respecting the leaf structure on $G$ induced by $\F$.  We then classify those $8$-dimensional Lie groups $G$ for which the almost Hermitian structures $J$ are integrable ($\mathcal{W}_3\oplus\mathcal{W}_4$), semi-Kähler ($\mathcal{W}_4$), locally conformal Kähler ($\mathcal{W}_3$) or even Kähler ($\mathcal{K}$).
\smallskip

It turns out that for each $J_\mathcal{V}$ we obtain a $9$-dimensional family of Lie groups $G$ for which $J$ is integrable.  In the case of $J$ being semi-Kähler, we yield a $3$-dimensional family of such groups.  We then show that in the cases of $J$ being Kähler or locally conformal Kähler there are no solutions i.e. such $8$-dimensional Lie groups do not exist.

\vskip 10cm
{\it Throughout this work it has been my firm intention to give reference to the stated results and credit to the work of others. All theorems, propositions, lemmas and examples left unmarked are either assumed to be too well known or the fruits of my own efforts.}

\newpage 
\thispagestyle{empty}
\phantom{m}

\newpage 
\thispagestyle{empty}
\centerline {\bf\Large Acknowledgments}
\vskip1cm
I would like to express my gratitude to my supervisor Sigmundur Gudmundsson for his encouragement, patience and trust in me throughout this project. Without his passion and insight in differential geometry or the fruitful discussions between the two of us, this thesis would be impossible during such a difficult time of the pandemic. Besides, I would like to thank the friendly people at Lund University for their hospitality during my three years' study at Sweden. Best wishes to all of you,

\vskip1pc
\hskip9cm Kexing Chen
\phantom{m}

\newpage 
\thispagestyle{empty}

\newpage 
\tableofcontents
\thispagestyle{empty}
\phantom{m}

\newpage 
\setcounter{page}{0} 
\thispagestyle{empty}

\chapter{Riemannian Manifolds}

We assume that the readers of this thesis have some basic knowledge of differential geometry. They can refer to the textbook $\cite{Sig}$ by Sigmundur Gudmundsson for a good introduction. In this chapter, we introduce the main object that we investigate and cover some notions and results that will be used later in this thesis. All submanifolds are assumed to be smoothly embedded.

\section{Lie Groups}

A finite-dimensional \emph{Lie group} is a smooth manifold $G$ with a group structure such that the multiplication and the inversion are smooth. Let $G$ and $H$ be Lie groups, a map $\phi: G \to H$ is called a Lie group isomorphism if it is both a group homomorphism and a diffeomorphism. If such a map exists, $G$ and $H$ are said to be isomorphic. A subgroup $H$ of $G$ is called a Lie subgroup if $H$ is also a submanifold of $G$.\smallskip

The Lie algebra $\g$ of a Lie group $G$ is the vector space of left-invariant vector fields on $G$. This can be identified with the tangent space at the identity $e \in G$. An inner product structure on $\g$ induces a left-invariant Riemannian metric $g$ on $G$ by the left translations. A \emph{Riemannian Lie group} is a Lie group $G$ equipped with a left-invariant Riemannian metric.\smallskip

A \emph{Lie algebra} over a field $\mathbb{F}$ is a vector space $V$ over $\mathbb{F}$ endowed with a bilinear map $[,] : V \times V \to V$ satisfying
\begin{enumerate}[label=(\roman*)]
    \item \emph{antisymmetry}: $[X, Y] = -[Y, X]$ for any $X, Y \in V$,
    \item \emph{Jacobi identity}: $[X, [Y, Z]] + [Y, [Z, X]] + [Z, [X, Y]] = 0$ for any $X, Y, Z \in V$.
\end{enumerate}

Let $\g ,\h$ be two Lie algebras over a field $\mathbb{F}$, a linear map $\phi: \g \to \h$ is said to be a Lie algebra homomorphism if it preserves the Lie brackets i.e. $\phi([X, Y]) = [\phi(X), \phi(Y)]$. It is said to be a Lie algebra isomorphism if it is also bijective. The Lie algebras $\g$ and $\h$ are said to be isomorphic if there exists a Lie algebra isomorphism between them.\smallskip

The Lie algebra of a finite-dimensional Lie group is a finite-dimensional Lie algebra. Moreover, simply connected Lie groups can be classified by their Lie algebras. Specifically, we have the following result.

\begin{theorem}[{{\cite[page 531]{Lee}}}]
For any finite-dimensional Lie algebra $\g$ over $\mathbb{R}$, there exists a simply connected Lie group $G$ whose Lie algebra is isomorphic to $\g$. Furthermore, if $G$ and $H$ are simply connected Lie groups with isomorphic Lie algebras, then $G$ and $H$ are isomorphic.\label{Lie 1}
\end{theorem}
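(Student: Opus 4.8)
The plan is to treat the two assertions separately, since they rest on quite different machinery. The existence statement is the substantive one --- it is essentially Lie's third theorem --- while the uniqueness statement is a comparatively soft consequence of covering-space theory, once one knows how to integrate a Lie algebra homomorphism to a Lie group homomorphism. I would prove existence by realising $\g$ concretely inside a matrix algebra and then taking a universal cover, and I would prove uniqueness by the ``graph'' trick.

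For existence, the first step I would take is to reduce the abstract Lie algebra $\g$ to a concrete matrix Lie algebra. By Ado's theorem, every finite-dimensional real Lie algebra admits a faithful finite-dimensional representation, so there is an injective Lie algebra homomorphism $\g \hookrightarrow \glr{n}$ for some $n$, identifying $\g$ with a Lie subalgebra $\g' \subseteq \glr{n}$. The second step is to integrate this subalgebra: the subspace $\g'$ determines a left-invariant involutive distribution on $\GLR{n}$, and by the Frobenius theorem together with the standard theory of integral subgroups its maximal integral leaf through the identity is a connected immersed Lie subgroup $H \le \GLR{n}$ whose Lie algebra is $\g'$. Finally I would pass to the universal cover $\pi \colon \widetilde H \to H$; since $\pi$ is a local diffeomorphism it induces a Lie algebra isomorphism, so $\widetilde H$ is a simply connected Lie group with Lie algebra isomorphic to $\g$.

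For uniqueness, suppose $G$ and $H$ are simply connected with a Lie algebra isomorphism $\phi \colon \g \to \h$. The key device is the graph: the subspace $\d = \{(X,\phi(X)) : X \in \g\}$ is a Lie subalgebra of $\g \oplus \h$, the Lie algebra of $G \times H$, precisely because $\phi$ preserves brackets. Integrating $\d$ as above yields a connected Lie subgroup $K \le G \times H$, and I would then examine the projection $\mathrm{pr}_G \colon K \to G$. This is a Lie group homomorphism whose differential at the identity is the first projection restricted to $\d$, which is a linear isomorphism onto $\g$; hence $\mathrm{pr}_G$ is a covering homomorphism onto the connected group $G$. Because $G$ is simply connected, this covering is an isomorphism, so $\mathrm{pr}_H \circ (\mathrm{pr}_G)^{-1} \colon G \to H$ is a Lie group homomorphism with differential $\phi$. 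Running the same argument with $\phi^{-1}$ produces the inverse homomorphism, whence $G \cong H$.

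The hard part will be the existence half, and within it the appeal to Ado's theorem, which is genuinely deep: its proof requires the Levi decomposition of $\g$ into a semisimple part and the solvable radical, together with a careful construction of a faithful representation built from the universal enveloping algebra of the nilradical. Everything downstream --- integrating a subalgebra to an integral subgroup and lifting to the universal cover --- is routine once Ado is granted. I would therefore either cite Ado's theorem as a black box, as the ambient text does in referring to \cite{Lee}, or, were a self-contained treatment demanded, devote the bulk of the argument to it.
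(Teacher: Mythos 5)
Your proposal is correct, but note that the paper does not prove this theorem at all: it is quoted as a background result with a citation to \cite{Lee}, and your argument is essentially the one given in that reference (Ado's theorem plus integration of a subalgebra and passage to the universal cover for existence; the graph subalgebra and covering-space argument for uniqueness). Both halves of your sketch are sound, including the key observation that the projection $\mathrm{pr}_G$ restricted to the integral subgroup of the graph is a covering homomorphism, which simple connectedness of $G$ then forces to be an isomorphism.
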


Another result about the correspondence between Lie subgroups and Lie subalgebras will be useful when we classify Lie groups containing $K$ as a Lie subgroup.

\begin{theorem}[{{\cite[page 506]{Lee}}}]
Let $G$ be a Lie group with Lie algebra $\g$. If $\k$ is a subalgebra of $\g$, then there exists a unique connected Lie subgroup $K$ of $G$ with Lie algebra $\k$.\label{Lie 2}
\end{theorem}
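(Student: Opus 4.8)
The plan is to realize $K$ as the maximal integral leaf of a left-invariant distribution on $G$ built from $\k$, with the Frobenius theorem as the central tool. First I would define, for each $g \in G$, the subspace $D_g = (dL_g)_e(\k) \subseteq T_g G$, where $L_g$ denotes left translation by $g$. Because $\k$ is a fixed subspace of $\g = T_e G$ and left translations are diffeomorphisms depending smoothly on $g$, the assignment $g \mapsto D_g$ is a smooth distribution $D$ on $G$ of constant rank $\dim \k$, spanned at each point by the left-invariant vector fields whose values at $e$ lie in $\k$.

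The key structural input is that $\k$ is a \emph{subalgebra}, which I would use to show that $D$ is involutive: if $X, Y$ are left-invariant vector fields with $X_e, Y_e \in \k$, then $[X,Y]$ is again left-invariant with $[X,Y]_e = [X_e, Y_e] \in \k$, so the bracket stays tangent to $D$. By the Frobenius theorem, $D$ is therefore integrable, and through the identity $e$ there passes a unique maximal connected integral manifold; I would take this leaf to be $K$. By construction $K$ is an immersed submanifold with $T_e K = D_e = \k$.

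The hard part will be verifying that this leaf $K$ is genuinely a subgroup and that multiplication and inversion restrict to \emph{smooth} maps on $K$, making it a Lie subgroup. Here I would exploit the left-invariance of $D$: for any $g \in K$, the diffeomorphism $L_{g^{-1}}$ carries $D$ to itself, hence sends the leaf through $g$ to the leaf through $e$; since $g \in K$ this forces $g^{-1} K = K$. Taking the identity in $K$ yields $g^{-1} \in K$, and then $g^{-1} h \in K$ for all $h \in K$, so $K$ is closed under the group operations. Smoothness of these operations as maps into $K$ follows from the fact that leaves of a foliation are weakly embedded (initial) submanifolds: any smooth map into $G$ whose image lies in $K$ is automatically smooth as a map into $K$, which applies to the restrictions of multiplication and inversion.

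Finally, for uniqueness I would argue that any connected Lie subgroup $K'$ of $G$ with Lie algebra $\k$ must have its tangent spaces given precisely by the left translates of $\k$, so $K'$ is an integral manifold of $D$; being connected and containing $e$, it lies inside the maximal leaf $K$, and a dimension count together with connectedness forces $K' = K$. The principal obstacle throughout is the interplay between the immersed, non-embedded nature of $K$ and the smoothness requirements, which is exactly where the weakly-embedded property of Frobenius leaves is indispensable.
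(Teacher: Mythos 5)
Your proposal is correct and is essentially the canonical proof of this correspondence --- the one given in Lee's textbook, which is precisely the source the paper cites for this statement (the paper itself gives no proof), and your argument rests on the same global Frobenius theorem that the paper records in its section on foliations: the left-invariant distribution generated by $\k$, involutivity from the subalgebra hypothesis, the leaf through $e$ as $K$, left-translation of leaves for the subgroup property, weak embeddedness for smoothness, and maximality for uniqueness. The only step to tighten is the end of the uniqueness argument: the dimension count makes $K'$ open in $K$, but openness plus connectedness alone does not force $K'=K$; you must also use that $K'$ is a subgroup, so its cosets in $K$ are open, hence $K'$ is closed in $K$, and only then does connectedness of $K$ give $K'=K$.
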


\section{Foliations}

Let $M$ be a smooth $m$-dimensional manifold with tangent bundle $TM$. A \emph{smooth distribution} $\mathcal{V}$ of rank $k$ is a $k$-dimensional subbundle of $TM$. In other words, it is the collection of some $k$-dimensional subspaces of the tangent spaces of $M$ that vary smoothly over $M$.\smallskip

A smooth distribution $\mathcal{V}$ on M is said to be \emph{involutive} if for any vector fields $X, Y$ in $\mathcal{V}$, the vector field $[X, Y]$ also belongs to $\mathcal{V}$. Given a smooth distribution $\mathcal{V}$ on $M$, a submanifold $N$ of $M$ is said to be an \emph{integral manifold} of $\mathcal{V}$ if $T_pN = \mathcal{V}_p$ for any $p \in M$. A connected integral manifold of $\mathcal{V}$ is said to be \emph{maximal} if it is not contained in any other connected integral manifold of $\mathcal{V}$.\smallskip

Next, we define the notion of a foliation on a smooth manifold $M$.

\begin{definition}\label{def of foliation}
Let $\mathcal{F} = \{L_\alpha\ | \alpha \in I\}$ be a collection of disjoint, connected, nonempty $k$-dimensional submanifolds of $M$. Then $\mathcal{F}$ is called a \emph{foliation} of dimension $k$ on $M$ if
\begin{enumerate}[label=(\roman*)]
    \item $\bigcup\limits_{\alpha \in I}L_\alpha = M$,
    \item for any point $p \in M$, there exists a chart $(U, \phi)$ about $p$ such that $\phi(U)$ is a cube in $\mathbb{R}^m$ and the intersection $L_\alpha \cap U$ for each $L_\alpha \in \mathcal{F}$ is either empty or a countable union of $k$-dimensional slices of the form $x^{k + 1} = c^{k + 1}, \dots, x^n = c^{m}$.  
\end{enumerate}
An element $L_\alpha$ of a foliation is called a \emph{leaf}. And (ii) is equivalent to the following condition: for each point $p \in M$, there is an open neighborhood $W$ of $p$ in $M$, a $(m-k)$-dimensional smooth manifold $N$ and a smooth submersion $\phi: W \to N$ such that for each $L_\alpha \in \mathcal{F}$, the connected components of $W \cap L_\alpha$ are the connected components of the fibres of $\phi$.
\end{definition}

Given a smooth submersion $\phi: M \to N$, by Definition \ref{def of foliation} and the implicit function theorem, we see that the collection $\mathcal{F}$ of the connected components of the fibres of $\phi$ is a smooth foliation on $M$. We will call $\mathcal{F}$ the \emph{foliation associated to} $\phi$. And a foliation is said to be \emph{simple} if it is associated to some global smooth submersion with connected fibres. The neighborhood $W$ in Definition \eqref{def of foliation} is called a \emph{$\mathcal{F}$-simple open set}.\smallskip

Foliations are related to involutive distributions by the following important global Frobenius theorem.

\begin{theorem}[{{\cite[page 502]{Lee}}}]\label{Frobenius}
Let $\mathcal{V}$ be an involutive distribution on a smooth manifold $M$. Then the collection of all maximal connected integral manifolds of $\mathcal{V}$ forms a foliation of $M$. Conversely, given a foliation $\mathcal{F}$ on $M$, the collection of tangent spaces of the leaves of $\mathcal{F}$ forms a distribution on $M$, which is called the associated distribution of $\mathcal{F}$. Furthermore, the associated distribution of a foliation $\mathcal{F}$ on $M$ is involutive.
\end{theorem}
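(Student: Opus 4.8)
The plan is to derive the global statement from the \emph{local} Frobenius normal form, and to treat the two directions separately. For the forward direction I would first show that every involutive $\mathcal{V}$ is \emph{flat} in suitable coordinates: around each $p \in M$ there is a chart $(U,\varphi)$ with coordinates $(x^1,\dots,x^m)$ on which $\mathcal{V}$ is spanned by $\partial/\partial x^1,\dots,\partial/\partial x^k$. I would prove this by induction on the rank $k$. Starting from an arbitrary local frame $X_1,\dots,X_k$ of $\mathcal{V}$, the canonical-form theorem for a single nonvanishing vector field straightens $X_1$ to $\partial/\partial x^1$; subtracting the $\partial/\partial x^1$-components from $X_2,\dots,X_k$ produces a frame of a rank-$(k-1)$ distribution transverse to $x^1$, and the Jacobi identity together with the involutivity of $\mathcal{V}$ guarantees that this reduced distribution is again involutive, so the inductive hypothesis applies in the remaining $m-1$ coordinates. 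The base case $k=1$ is exactly the straightening theorem.

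With the flat charts in hand, the slices $\{x^{k+1}=c^{k+1},\dots,x^m=c^m\}$ are connected integral manifolds of $\mathcal{V}$, and this is precisely the slice condition (ii) of Definition~\ref{def of foliation}. To assemble the global \emph{leaves} I would introduce on $M$ the relation $p\sim q$ meaning that $p$ and $q$ can be joined by a finite chain of flat-chart slices (equivalently, by a piecewise-smooth curve everywhere tangent to $\mathcal{V}$). The equivalence classes are the candidate leaves; each class is topologized by declaring the slices through its points to be basic open sets, which endows it with the structure of a connected immersed submanifold whose tangent space at each point $p$ coincides with $\mathcal{V}_p$, and which is maximal by construction. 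I expect this gluing step to be the main obstacle: one must check that the slice topology turns each class into a genuine smooth manifold — in particular that it is second countable and Hausdorff — and that each leaf meets any single flat chart in \emph{at most countably many} slices, which is the countability clause demanded in Definition~\ref{def of foliation}. This is where a separability argument is needed, and it is the only genuinely nontrivial point in the whole proof.

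The converse is comparatively routine. Given a foliation $\mathcal{F}$, the foliation charts exhibit the assignment sending $p$ to the tangent space $T_pL$ of the leaf $L$ through $p$ as being locally spanned by $\partial/\partial x^1,\dots,\partial/\partial x^k$; hence it is a smooth rank-$k$ subbundle of $TM$, that is, a distribution. For involutivity, any two local sections $X,Y$ of this distribution are tangent to each leaf, and the leaves are embedded submanifolds within each foliation chart, so the standard fact that the bracket of two vector fields tangent to a submanifold is again tangent to it gives $[X,Y]\in\mathcal{V}$. This completes the plan.
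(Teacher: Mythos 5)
The paper itself gives no proof of this statement; it is imported from Lee's textbook with a citation, so your proposal can only be measured against the standard argument from that source, and that is essentially what you have reconstructed: local flat charts for an involutive distribution, leaves assembled as equivalence classes under chains of slices, a separability argument for the countable-slice condition of Definition \ref{def of foliation}, and the routine converse. Two remarks on the details. First, your local step is the rank-induction proof (Warner's route) rather than Lee's: Lee instead produces a \emph{commuting} local frame for $\mathcal{V}$ by projecting onto coordinate directions and then invokes simultaneous straightening of commuting flows. Both work, but in the induction the real work is not quite where you place it: after flattening the reduced rank-$(k-1)$ distribution on a slice $\{x^1 = c\}$, one still has to show that the resulting coordinates, extended off the slice, flatten all of $\mathcal{V}$, and this propagation along the flow of $X_1$ uses involutivity a second time (the functions $X_j'(x^i)$ for $i > k$ satisfy a linear ODE in $x^1$ with zero initial data, hence vanish). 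Incidentally, the Jacobi identity is not what makes the reduced distribution involutive; that follows from involutivity of $\mathcal{V}$ together with the vanishing of the $\partial/\partial x^1$-components of the brackets. Second, your identification of the global issue --- Hausdorffness and second countability of the leaves, and the at-most-countably-many-slices claim --- is the right one, and the standard resolution is the one you gesture at: a connected integral manifold is itself second countable, its intersection with a flat chart is open in it and therefore has countably many components, and each component lies in a single slice because $x^{k+1}, \dots, x^m$ are locally constant on integral manifolds. With those points filled in, your plan is correct and is, in substance, the proof the paper is citing.
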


A subgroup of a Lie group gives us examples of distributions and foliations that are of particular interest to us.

\begin{definition}
A distribution $\mathcal{V}$ on a Lie group $G$ is said to be \emph{left-invariant} if it is closed under left translations.
\end{definition}

\begin{definition}
    Let $G$ be a Lie group with Lie algebra $\g$. Then the distribution on $G$ generated by left translations of a subalgebra $\k$ of $\g$ is involutive. It is called the \emph{left-invariant distribution} generated by $\k$.
\end{definition}

By Theorem \eqref{Frobenius}, the left-invariant distribution generated by a Lie subalgebra $\k$ produces a foliation on $M$. We call it the \emph{left-invariant foliation} on $M$ generated by $\k$.

\section{Minimal Submanifolds}

In this section we introduce the mean curvature vector field and minimal submanifolds in preparation for the next section. Readers can find more details about this part in the textbook $\cite{Kob}$ by Kobayashi and Nomizu.\smallskip

Let $(N, h)$ be an $(m + r)$-dimensional Riemannian manifold and $M$ be an $m$-dimensional submanifold of $N$ equipped with the metric $g$ induced by $h$. The Levi-Civita connection on $N$ is denoted by $\nabla$. Let $X$ be a vector field on $M$ and $Z \in C^\infty(NM)$ be a vector field in the normal bundle $NM$ of $M$. We define a vector field by
\[
S_Z(X) = -(\nabla_XZ)^\top
\]
where $\top$ means the orthogonal projection onto $TM$.\smallskip

The operator $S$ generalizes the shape operator of a surface in $\mathbb{R}^3$ by the following \emph{Weingarten equation}.

\begin{proposition}[{{\cite[page 14]{Kob}}}]\label{shape operator}
The map $S: C^\infty(NM) \times C^\infty(TM) \to C^\infty(TM)$ is a tensor field on $M$. Therefore, for any point $p \in M$, the value of $(S_Z(X))_p$ only depends on the values of $Z$ and $X$ at $p$. Therefore, $S$ induces a well-defined bilinear map $S: T_xM^\perp \times T_xM \to T_xM$. Furthermore, we define the second fundamental form $B$ of $M$ by
\[
B(X, Y) = (\nabla_XY)^\perp
\]
where $X, Y \in T_pM$ and $\perp$ is the orthogonal projection onto $NM$. Then we have
\begin{equation}
    \langle S_Z(X), Y \rangle = \langle B(X, Y), Z \rangle \label{equation of shape operator}
\end{equation}
for any $X, Y \in T_pM$ and $Z \in T_pM^\perp$.
\end{proposition}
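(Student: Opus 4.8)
The plan is to establish the two tensoriality claims first, then derive the Weingarten equation \eqref{equation of shape operator} from the compatibility of the Levi-Civita connection $\nabla$ with the metric $h$. To make sense of $\nabla_X Z$ I would first extend $X \in C^\infty(TM)$ and $Z \in C^\infty(NM)$ to vector fields on an open neighbourhood of $M$ in $N$; the tensoriality statements will then guarantee that $(S_Z(X))_p$ is independent of the chosen extensions and depends only on $X_p$ and $Z_p$.

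First I would verify that $S$ is $C^\infty(M)$-bilinear. Linearity over $C^\infty(M)$ in the argument $X$ is immediate, since any connection satisfies $\nabla_{fX}Z = f\,\nabla_X Z$, whence $S_Z(fX) = f\,S_Z(X)$; additivity is clear. The argument in $Z$ is the more delicate one, because a connection is \emph{not} tensorial in the differentiated slot. Writing out the Leibniz rule gives
\[
\nabla_X(fZ) = (Xf)\,Z + f\,\nabla_X Z .
\]
The crucial observation is that $(Xf)Z$ is a pointwise scalar multiple of the normal field $Z$, hence lies in $NM$ and is annihilated by the tangential projection $\top$. Therefore $S_{fZ}(X) = -(\nabla_X(fZ))^\top = -f\,(\nabla_X Z)^\top = f\,S_Z(X)$, and $S$ is tensorial in $Z$ as well. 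By the standard characterisation of tensor fields, $(S_Z(X))_p$ then depends only on $X_p$ and $Z_p$, which yields the well-defined bilinear map $S\colon T_xM^\perp \times T_xM \to T_xM$.

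For the Weingarten equation itself I would exploit that $Y \in T_pM$ and $Z \in T_pM^\perp$ are orthogonal along all of $M$, so that $\langle Y, Z\rangle \equiv 0$ on $M$. Differentiating this identity in the direction $X \in T_pM$ and applying metric compatibility of $\nabla$ gives
\[
0 = X\langle Y, Z\rangle = \langle \nabla_X Y, Z\rangle + \langle Y, \nabla_X Z\rangle .
\]
Since $Z$ is normal, the first inner product only sees the normal part of $\nabla_X Y$, so $\langle \nabla_X Y, Z\rangle = \langle (\nabla_X Y)^\perp, Z\rangle = \langle B(X,Y), Z\rangle$; since $Y$ is tangent, the second only sees the tangential part of $\nabla_X Z$, so $\langle Y, \nabla_X Z\rangle = \langle Y, (\nabla_X Z)^\top\rangle = -\langle S_Z(X), Y\rangle$. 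Substituting these into the displayed identity rearranges precisely to \eqref{equation of shape operator}. As a byproduct, since this holds for every normal $Z$ and the left-hand side is tensorial in each slot, $B(X,Y)$ is itself well-defined pointwise.

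The computations here are short, and I do not expect a genuine obstacle; the one point demanding care is the tensoriality in $Z$. Because $\nabla$ differentiates $Z$, one must check that the inhomogeneous term $(Xf)Z$ produced by the Leibniz rule contributes nothing after projecting onto $TM$ --- it is exactly the normality of $Z$ that rescues tensoriality, and this same structural fact, the orthogonality of $TM$ and $NM$, is what drives the final equation. A secondary point worth stating explicitly is that all identities are extension-independent, which is precisely what the tensor characterisation provides.
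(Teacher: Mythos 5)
Your argument is correct. Note, however, that the paper does not prove this proposition at all: it is stated as a known result with a citation to Kobayashi--Nomizu, and the text immediately proceeds to use it to derive the formula for the mean curvature vector. So there is no in-paper proof to compare against; what you have written is the standard textbook argument (essentially the one in the cited reference): $C^\infty(M)$-linearity in $X$ is automatic, linearity in $Z$ holds because the Leibniz term $(Xf)Z$ is normal and is killed by the tangential projection, and the Weingarten identity \eqref{equation of shape operator} falls out of differentiating $\langle Y, Z\rangle \equiv 0$ along $M$ using metric compatibility and then splitting $\nabla_X Y$ and $\nabla_X Z$ into tangential and normal parts. All signs check out with the paper's conventions $S_Z(X) = -(\nabla_X Z)^\top$, $B(X,Y) = (\nabla_X Y)^\perp$. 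One small logical refinement: the independence of $(\nabla_X Z)_p$ from the chosen extensions of $X$ and $Z$ is not literally a consequence of the $C^\infty(M)$-tensoriality you prove, but of the locality of the connection (its value depends only on $X_p$ and on $Z$ restricted to a curve in $M$ tangent to $X_p$); tensoriality then converts dependence on the sections into dependence on the pointwise values $X_p$, $Z_p$. This is a foundational point that the cited sources also treat briskly, and it does not affect the validity of your proof.
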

For a fixed $Z \in T_pM^\perp$, the map $X \mapsto S_Z(X)$ is a symmetric linear endomorphism of $T_xM$ by the symmetry of the second fundamental form and \eqref{equation of shape operator}. So we have a linear functional on $T_pM^\perp$ defined by
\[
Z \mapsto \frac{1}{m}\trace{S_Z}.
\]
From linear algebra we know that there exists a unique normal vector $\eta \in T_pM^\perp$ such that 
\[
g(\eta, Z) = \frac{1}{m}\trace{S_Z} \label{def of mc}
\]
for any $Z \in T_pM^\perp$. We call $\eta$ the \emph{mean curvature vector} of $M$ at $p$. We shall derive a formula for the mean curvature vector in an orthonormal basis. Let $\xi_1, ..., \xi_r$ be an orthonormal basis for $T_pM^\perp$ and $\beta_1, ..., \beta_m$ be an orthonormal basis for $T_pM$. The operator $S_{\xi_j}$ is abbreviated as $S_j$. Since the operator $S_Z$ is symmetric with respect to $g$, by \eqref{equation of shape operator} and \eqref{def of mc} we have
\begin{align}
    \eta &= \sum_{j = 1}^{r}\langle \eta, \xi_j \rangle \xi_j\nonumber\\
    &= \sum_{j = 1}^{r}(\frac{1}{m}\trace{S_j})\xi_j\nonumber\\
    &= \frac{1}{m}\sum_{j = 1}^{r}\sum_{k = 1}^{m}\langle S_j(\beta_k), \beta_k\rangle \xi_j\nonumber\\
    &= \frac{1}{m}\sum_{j = 1}^{r}\sum_{k = 1}^{m}\langle B(\beta_k, \beta_k), \xi_j\rangle \xi_j\nonumber\\
    &= \frac{1}{m}\sum_{j = 1}^{r}\Big\langle\sum_{k = 1}^{m} B(\beta_k, \beta_k), \xi_j\Big\rangle \xi_j\nonumber\\
    &= \frac{1}{m}\sum_{k = 1}^{m} B(\beta_k, \beta_k)\label{formula for eta}
\end{align}

Applying \eqref{formula for eta} to a local orthonormal frame of $TM$, we know that there is a well-defined smooth vector field $\eta^M \in C^\infty(NM)$ that takes the value of mean curvature vector of $M$ at every point. We call $\eta^M$ the \emph{mean curvature vector field} of $M$ in $(N, h)$.
\begin{definition}
    Let $(M, g)$ be a Riemannian manifold isometrically embedded into $(N, h)$. Then $M$ is said to be a \emph{minimal submanifold} if $\eta^M$ vanishes identically. By \eqref{formula for eta} we see that $M$ is minimal if and only if
    \[
    \eta^M = \trace{B} = \sum_{k = 1}^{m} B(X_k, X_k) = 0,
    \]
    for any local orthonormal frame $\{X_1, ..., X_m\}$ for $TM$.
\end{definition}

\section{Foliations on Riemannian Manifolds}

Now we can define some particular types of foliations on Riemannian manifolds. Readers are referred to \cite{Bai-Woo-book} for more details about this part.\smallskip

Let $(M, g)$ be a Riemannian manifold of dimension $m$ and denote the Levi-Civita connection of $M$ by $\nabla$. Let $\mathcal{V}$ be a $q$-dimensional distribution on $M$ and denote the orthogonal distribution $\mathcal{V}^\perp$ by $\mathcal{H}$. We call $\mathcal{V}$ the \emph{vertical distribution} and $\mathcal{H}$ the \emph{horizontal distribution}. The orthogonal projections from $TM$ onto $\mathcal{V}, \mathcal{H}$ are also denoted as $\mathcal{V}, \mathcal{H}$, respectively.\smallskip

Then we introduce second fundamental forms on distributions. More specifically we have the following definition.
\begin{definition}\label{second fundamental form}
    The \emph{second fundamental form} of the vertical distribution $\mathcal{V}$ is the tensor field $B^\mathcal{V}$ of type $(1, 2)$ on $M$ defined by
    \begin{equation}
        B^\mathcal{V}(X, Y) = \frac{1}{2}(\mathcal{H}(\nabla_{\mathcal{V}X}\mathcal{V}Y) + \mathcal{H}(\nabla_{\mathcal{V}Y}\mathcal{V}X)) \,\,\,\,\,\text{for any}\,\,\,\,\,X, Y \in C^\infty(TM).
    \end{equation}
Similarly, the second fundamental form of the horizontal distribution $\mathcal{H}$ is the tensor field $B^\mathcal{H}$ is defined by
    \begin{equation}
        B^\mathcal{H}(X, Y) = \frac{1}{2}(\mathcal{V}(\nabla_{\mathcal{H}X}\mathcal{H}Y) + \mathcal{V}(\nabla_{\mathcal{H}Y}\mathcal{H}X)) \,\,\,\,\,\text{for any}\,\,\,\,\,X, Y \in C^\infty(TM).
    \end{equation}
\end{definition}

Notice that if the distribution $\mathcal{V}$ is involutive, then for any point $p \in M$ and vectors $X, Y \in \mathcal{V}_p$, by Definition \ref{second fundamental form} and the fact the $\nabla$ is torsion-free, we have
\begin{align*}
    B^\mathcal{V}(X, Y) &= \frac{1}{2}(\mathcal{H}(\nabla_{\mathcal{V}X}\mathcal{V}Y) + \mathcal{H}(\nabla_{\mathcal{V}Y}\mathcal{V}X))\\
    &= \frac{1}{2}(\mathcal{H}(\nabla_XY) + \mathcal{H}(\nabla_YX))\\
    &= \frac{1}{2}(\mathcal{H}(\nabla_XY) + \mathcal{H}(\nabla_XY - [X, Y]))\\
    &= \mathcal{H}(\nabla_XY)
\end{align*}

So Definition \ref{second fundamental form} agrees with the usual notion of the second fundamental form on the integral manifolds of $\mathcal{V}$ when $\mathcal{V}$ is involutive.

\begin{definition}
    An involutive distribution $\mathcal{V}$ on a Riemannian manifold $(M, g)$ is said to be
    \begin{enumerate}[label=(\roman*)]
        \item \emph{minimal}, if the integral manifolds of $\mathcal{V}$ are all minimal i.e.
        \[
        \trace{B^{\mathcal{V}}} = \sum_{j = 1}^{m} B^{\mathcal{V}}(E_j, E_j) = 0,
        \]
        for some and hence for all orthonormal basis $\{E_1, \dots, E_m\}$ of $\mathcal{V}_p$ at any point $p$,
        \item \emph{totally geodesic}, if the integral manifolds of $\mathcal{V}$ are all totally geodesic i.e.
        \[
        B^\mathcal{V} \equiv 0,
        \]
        \item \emph{conformal}, if for vector fields $X, Y$ in the horizontal distribution $\mathcal{H}$, there exists a vector field $V$ in $\mathcal{V}$ such that
        \[
        B^\mathcal{H}(X, Y) = g(X, Y) \otimes V,
       \]
       \item \emph{Riemannian}, if $V \equiv 0$ in (iii) i.e. $B^\mathcal{H} \equiv 0$.
    \end{enumerate}
\end{definition}

A smooth foliation $\mathcal{F}$ on a Riemannian manifold $(M, g)$ is said to be \emph{minimal}, \emph{totally geodesic}, \emph{conformal}, or \emph{Riemannian} if the same applies to the associated vertical distribution $\mathcal{V}$.

\section{Harmonic Morphisms}

In this section, we give a brief introduction to harmonic morphisms and explain how foliations and harmonic morphisms are closely related. Finally we introduce the main object we investigate in this thesis.\smallskip

First we define the \emph{musical isomorphisms} between the tangent and cotangent bundles of a Riemannnian manifold.

\begin{definition}
    Let $(M, g)$ be a Riemannian manifold. For any vector field $X \in C^\infty(TM)$, there exists a unique 1-form $X^\flat$ called the \emph{flat} of $X$ such that
    \[
    X^\flat(Y) = g(X, Y),
    \]
    for any vector field $Y \in C^\infty(TM)$. Similarly, for any 1-form $\omega$ on $M$, there exists a unique vector field $\omega^\sharp$ called the \emph{sharp} of $\omega$ such that
    \[
    g(\omega^\sharp, Y) = \omega(Y),
    \]
    for any vector field $Y \in C^\infty(TM)$. The operators $\flat$ and $\sharp$ are mutually inverse vector bundle isomorphisms between $TM$ and $T^*M$.
\end{definition}

Let $X$ be a smooth vector field on a Riemannian manifold $(M, g)$. Then there is a $C^\infty(M)$-linear map from $C^\infty(TM) \times C^\infty(TM)$ to $C^\infty(M)$ defined by 
\begin{equation}
    (Y, Z) \mapsto g(\nabla_YX, Z). \label{nabla x}
\end{equation}
The map \eqref{nabla x} gives rise to a bilinear form on $T_pM$ for each point $p \in M$. We can take the trace of this map with respect to the metric tensor $g$. In particular, let $\{E_1, \dots, E_m\}$ be a local orthonormal frame, then we have
\[
\trace{\nabla X} = \sum_{j = 1}^m g(\nabla_{E_j}X, E_j).
\]

\begin{definition}\label{def of grad and div}
    Let $f: M \to \mathbb{R}$ be a smooth function on a Riemannian manifold $(M, g)$. Then the \emph{gradient} of $f$ is the vector field given by
    \begin{equation}
        \grad f = (df)^\sharp. \label{def of grad}
    \end{equation}
    Let $X$ be a smooth vector field on $M$. Then the \emph{divergence} of $X$ is defined by 
    \begin{equation}
        \Div X = \trace{\nabla X}. \label{def of div}
    \end{equation}
\end{definition}

By expanding \eqref{def of grad} and \eqref{def of div} by an orthonormal frame, we see that Definition \ref{def of grad and div} agrees with the usual definition of the gradient and divergence in $\mathbb{R}^n$. A smooth function $f: M \to \mathbb{R}$ on a Riemannian manifold $(M, g)$ is said to be \emph{harmonic} if 
\[
\Delta f = \Div \grad f \equiv 0
\]
and the operator $\Delta = \Div \grad$ is called the \emph{Laplace-Beltrami operator}.

Now we can define a harmonic morphism between Riemannian manifolds.
\begin{definition}\label{def of harmonic morphism}
    A smooth map $\phi: M \to N$ between Riemannian manifolds $(M, g)$ and $(N, h)$ is called a \emph{harmonic morphism} if for every harmonic function $f: V \to \mathbb{R}$ defined on an open subset $V$ of N such that $f^{-1}(V) \neq \emptyset$, the function $f \circ \phi$ is harmonic on $\phi^{-1}(V)$. 
\end{definition}

In this thesis we are particularly interested in conformal foliations of codimension 2 because of the following result, see page 128 in \cite{Bai-Woo-book}.
\begin{theorem}\label{conformal foliation produces harmonic morphisms}
Let $\mathcal{F}$ be a conformal foliation of codimension 2 on a Riemannian manifold $M$. Then $\mathcal{F}$ is minimal if and only if for any $\mathcal{F}$-simple open set $U$ of $\mathcal{F}$, there is a submersive harmonic morphism from $U$, to a 2-dimensional Riemannian manifold $N$, whose associated foliation is $\mathcal{F}|_U$.
\end{theorem}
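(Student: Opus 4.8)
The plan is to reduce the statement to two classical facts about smooth maps between Riemannian manifolds: the Fuglede--Ishihara characterization of harmonic morphisms as precisely the harmonic, horizontally (weakly) conformal maps, and the Baird--Eells fundamental equation expressing the tension field $\tau(\phi)$ of a horizontally conformal submersion $\phi$ in terms of the mean curvature $\mu^{\mathcal{V}}$ of its fibres and the horizontal gradient of its dilation $\lambda$. The entire theorem will then hinge on a single dimension count: when the target is $2$-dimensional, the dilation contribution to $\tau(\phi)$ carries a factor $(\dim N - 2) = 0$ and drops out, leaving $\tau(\phi)$ proportional to $d\phi(\mu^{\mathcal{V}})$.

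First I would fix the dictionary between the foliation and a submersion. Given an $\mathcal{F}$-simple open set $U$, Definition \ref{def of foliation} supplies a smooth submersion $\phi \colon U \to N$ onto a $2$-dimensional manifold whose fibres have the leaves of $\mathcal{F}|_U$ as their connected components; thus $\mathcal{V} = \ker d\phi$ is exactly the tangent distribution of $\mathcal{F}$, and $\mathcal{H}$ is the normal (hence horizontal) distribution. The next step is to convert conformality of $\mathcal{F}$ into horizontal conformality of $\phi$: I would show that the condition $B^{\mathcal{H}}(X,Y) = g(X,Y)\otimes V$ forces the horizontal part of $g$ to be preserved, up to scaling, under the leaf flow, so that it descends to a well-defined conformal structure on the leaf space $N$. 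Choosing any metric $h$ on $N$ within this conformal class then makes $\phi$ a horizontally conformal submersion with some positive dilation $\lambda$; in dimension $2$ there is no obstruction, since only the conformal class of $h$ is used in the construction.

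With this in place, both directions follow from the fundamental equation. For the forward direction, assume $\mathcal{F}$ is minimal, so $\trace B^{\mathcal{V}} = 0$ and the fibres of $\phi$ have vanishing mean curvature $\mu^{\mathcal{V}}$; the Baird--Eells formula, with its dilation term killed by the factor $\dim N - 2$, reduces to $\tau(\phi) = -(m-2)\,d\phi(\mu^{\mathcal{V}}) = 0$, so $\phi$ is harmonic, and being horizontally conformal it is a harmonic morphism by Fuglede--Ishihara, with associated foliation $\mathcal{F}|_U$ by construction. For the converse, suppose each $\mathcal{F}$-simple set carries such a submersive harmonic morphism $\phi$. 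By Fuglede--Ishihara it is harmonic and horizontally conformal, so Baird--Eells again gives $\tau(\phi) = -(m-2)\,d\phi(\mu^{\mathcal{V}})$; harmonicity forces $d\phi(\mu^{\mathcal{V}}) = 0$, and since $d\phi$ restricts to a linear isomorphism on $\mathcal{H}$ while $\mu^{\mathcal{V}}$ is horizontal, this yields $\mu^{\mathcal{V}} = 0$. Hence the leaves are minimal on every simple set, and as these cover $M$ the foliation $\mathcal{F}$ is minimal.

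The main obstacle I anticipate is the middle step: verifying that conformality of $\mathcal{F}$, phrased through $B^{\mathcal{H}}$, is exactly what is needed to descend the horizontal metric to a conformal structure on $N$ and thereby produce a genuinely horizontally conformal submersion with a single globally defined dilation $\lambda$ on $U$. One must also confirm that the mean curvature vector of the leaves is horizontal, so that the injectivity of $d\phi|_{\mathcal{H}}$ can legitimately be invoked in the converse. Pinning down the precise Baird--Eells identity with the correct coefficients --- in particular confirming that it is the dilation term, and not the fibre-mean-curvature term, that vanishes in codimension $2$ --- is the quantitative heart of the argument, even though it is standard.
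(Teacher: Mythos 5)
The paper does not actually prove this theorem: it is imported verbatim from Baird--Wood \cite{Bai-Woo-book} (page 128), so the only proof to compare yours against is the one in that reference --- and your proposal is a correct reconstruction of exactly that argument. The three ingredients you identify are the right ones and are assembled in the right order: conformality of $\mathcal{F}$ in the sense of $B^{\mathcal{H}}$ is equivalent (via $(\mathcal{L}_V g)(X,Y) = -2\,g(V, B^{\mathcal{H}}(X,Y))$ for vertical $V$ and horizontal $X,Y$) to leafwise invariance of the horizontal conformal class, which lets it descend to the local leaf space and makes the defining submersion horizontally conformal for any representative metric; then the Fuglede--Ishihara characterization together with the fundamental equation $\tau(\phi) = -(m-n)\,d\phi(\mu^{\mathcal{V}}) - (n-2)\,d\phi(\operatorname{grad}_{\mathcal{H}}\log\lambda)$, whose dilation term vanishes precisely because $n = \dim N = 2$, gives both implications, the converse using that $\mu^{\mathcal{V}}$ is horizontal and $d\phi|_{\mathcal{H}}$ is injective, as you note. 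The only loose ends are ones inherited from the statement rather than from your argument: the degenerate case $m = 2$ (where the factor $m-2$ you divide by vanishes, but the leaves are points and minimality is vacuous), and the fact that the paper's definition of an $\mathcal{F}$-simple open set permits the defining submersion to have disconnected fibres, in which case the descent of the conformal class must be checked across fibre components rather than only along plaques --- the standard remedy, as in the cited source, is to work with simple open sets whose defining submersion has connected fibres.
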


So finding conformal and minimal foliations of codimension 2 will give us many interesting examples of harmonic morphisms. In the paper \cite{Tur}, the authors prove that for every 8-dimensional Lie group $G^8$ containing $K = \SU{2} \times \SU{2}$ as a Lie subgroup, the foliation $\F$ generated by $K$ is Riemannian and totally geodesic. Thus they obtain a 13-dimensional family of 8-dimensional Lie groups carrying a Riemannian foliation of codimension 2 with totally geodesic leaves. And this thesis is about investigating some structures from complex geometry on this family of Lie groups.

\chapter{Almost Hermitian Manifolds}

\section{Almost Hermitian Structures}

In this section, we introduce some interesting notions and results from complex geometry.\smallskip

By an n-dimensional \emph{complex manifold} we mean a second-countable Hausdorff space $M$ that admits an open cover $\{U_\alpha\}$ where each $U_\alpha$ is homeomorphic to some open subset of $\mathbb{C}^n$, which is called a \emph{complex chart}, and the transition maps are holomorphic.\smallskip

Let $M$ be a smooth manifold of real dimension $2n$. Then by an \emph{almost complex structure} on $M$ we mean a tensor field $J$ of type (1, 1) satisfying
\[
J^2 = -id.
\]
If an almost complex structure $J$ is defined on $M$, then $(M, J)$ is called an \emph{almost complex manifold}. A complex manifold of complex dimension $n$ has a canonical almost complex structure induced by its complex local charts. Let 
\[
z_k = x_k + i y_k, \,\,\,\,\,k = 1, \dots, n,
\]
be the complex coordinates for some complex chart. Then 
\[
x_k, y_k, \,\,\,\,\,k = 1, \dots, n,
\]
are local coordinates for the real manifold structure if we identify $\mathbb{C}^n$ with $\mathbb{R}^{2n}$. The map defined by
\[
\frac{\partial}{\partial x_k} \mapsto \frac{\partial}{\partial y_k}, \,\,\,\,\, \frac{\partial}{\partial y_k} \mapsto -\frac{\partial}{\partial x_k}, \,\,\,\,\,k = 1, \dots, n,
\]
is independent of the choice of the chart thus gives a canonical almost complex structure on $M$ induced by the complex one.\smallskip

Conversely, an almost complex structure $J$ on a smooth manifold $M$ is said to be \emph{integrable} if it is induced by some complex structure on $M$. The integrability problem can be converted into the calculation of some tensor fields by the following important Newlander-Nirenberg theorem.

\begin{theorem}[\cite{New-Nir}]\label{newlander nirenberg}
Let $J$ be an almost complex structure on a smooth manifold $M$. The Nijenhuis tensor $N_J$ of $J$ is the tensor field of type (1, 2) defined by
\begin{equation}
    N_J(X, Y) = [X, Y] + J[JX, Y] + J[X, JY] - [JX, JY], \label{nijenhuis}
\end{equation}
for all vector fields $X, Y$ on $M$. Then $J$ is integrable if and only if $N_J \equiv 0$.
\end{theorem}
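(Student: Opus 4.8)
The plan is to recast the integrability condition in terms of the complexified tangent bundle, after which the forward implication becomes a short computation and the converse becomes the analytic core of the theorem. First I would complexify: set $TM^{\cn} = TM \otimes_{\rn} \cn$ and extend $J$ complex-linearly. Since $J^2 = -\mathrm{id}$ its eigenvalues are $\pm i$, so $TM^{\cn}$ splits as $T^{1,0} \oplus T^{0,1}$ into the $(+i)$- and $(-i)$-eigenbundles of $J$, with $T^{0,1} = \overline{T^{1,0}}$. The decisive reduction is the identity, valid for sections $Z, W$ of $T^{0,1}$,
\begin{equation*}
N_J(Z, W) = 4\,\pi^{1,0}\bigl([Z, W]\bigr),
\end{equation*}
where $\pi^{1,0}$ denotes the projection onto $T^{1,0}$ and the bracket is extended $\cn$-bilinearly; this follows by substituting $JZ = -iZ$ and $JW = -iW$ into \eqref{nijenhuis} and collecting the eigencomponents. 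Using that $N_J$ is real together with the identity $N_J(JX, Y) = N_J(X, JY) = -J\,N_J(X, Y)$, one checks that the mixed components $N_J(T^{1,0}, T^{0,1})$ vanish automatically and that the $T^{1,0} \times T^{1,0}$ part is the conjugate of the $T^{0,1} \times T^{0,1}$ part. Hence the whole theorem reduces to the assertion that $J$ is integrable if and only if $T^{0,1}$ is involutive.

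With this reformulation the forward implication is immediate. If $J$ is integrable, then about any point there are complex coordinates $z_k = x_k + i y_k$ in which $J$ acts as recorded in the paragraph preceding the theorem; consequently $T^{1,0}$ is spanned by the commuting coordinate fields $\partial/\partial z_k$ and $T^{0,1}$ by $\partial/\partial \bar z_k$. Since coordinate fields commute, $T^{0,1}$ is involutive, and by the reduction above $N_J \equiv 0$.

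The converse is the hard direction, and this is where genuine analysis enters. Assuming $N_J \equiv 0$, the bundle $T^{0,1}$ is involutive, and integrating $J$ amounts to producing, near each point, $n$ functionally independent complex functions $z_1, \dots, z_n$ annihilated by every section of $T^{0,1}$; these then serve as holomorphic coordinates. In the real-analytic category I would complexify and invoke the holomorphic (complex) Frobenius theorem, essentially Cauchy--Kovalevskaya, so that involutivity of an analytic complex distribution delivers the required first integrals. The main obstacle is the merely smooth case: the complex Frobenius theorem is unavailable over $\cn$ for $C^\infty$ data, and one must instead solve an overdetermined nonlinear first-order system by means of a priori elliptic estimates combined with an iteration or fixed-point argument. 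This analytic step --- carried out in the original work of Newlander and Nirenberg, and later streamlined using $L^2$-estimates for the $\bar\partial$-operator --- is the technical heart of the result; I would cite it rather than reproduce it, since it lies well outside the scope of the constructions needed elsewhere in this thesis.
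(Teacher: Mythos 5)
The thesis does not prove this theorem at all: it is quoted from the literature with the citation to Newlander--Nirenberg, so there is no in-paper argument to compare yours against. That said, your proposal is a correct account of the standard route, and it proves strictly more than the thesis records. The reduction is sound with the paper's sign convention \eqref{nijenhuis}: for sections $Z, W$ of $T^{0,1}$ (so $JZ = -iZ$, $JW = -iW$) one gets $N_J(Z,W) = 2[Z,W] - 2iJ[Z,W] = 4\,\pi^{1,0}([Z,W])$, the mixed components vanish by $\cn$-bilinearity combined with $N_J(JX,Y) = -J\,N_J(X,Y)$, and the $(1,0)$-against-$(1,0)$ block is the conjugate of the $(0,1)$-against-$(0,1)$ block because $N_J$ is a real tensor; hence $N_J \equiv 0$ is indeed equivalent to involutivity of $T^{0,1}$. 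Your forward implication via the commuting fields $\partial/\partial \bar z_k$ is correct (one could argue even more directly: in a complex chart $J$ has constant coefficients in the coordinate frame, so every bracket appearing in $N_J$ on coordinate fields vanishes, and tensoriality finishes it). The converse --- producing $n$ independent smooth first integrals of an involutive $T^{0,1}$ --- is the genuine analytic content of the theorem, and no algebraic reformulation avoids it; citing the original elliptic-estimate proof there is both appropriate and unavoidable at the level of this thesis, which itself cites the entire statement. So there is no gap relative to the paper: you have supplied the reduction and the easy direction that the paper omits, and delegated exactly the part that must be delegated.
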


Now let $(M,g)$ be a $2n$-dimensional Riemannian manifold. An almost complex structure $J$ is said to be \emph{compatible} with $g$ if
\[
g(X, Y) = g(JX, JY),
\]
for any point $p \in M$ and $X, Y \in T_pM$. A Riemannian manifold $(M, g)$ with a compatible almost complex structure $J$ is called an \emph{almost Hermitian manifold}. If this almost complex structure $J$ is integrable, then $g$ induces a \emph{Hermitian metric} on the complex structure induced by $J$ and we call $(M, g, J)$ a \emph{Hermitian manifold}.\smallskip

In this thesis we are particularly interested in the left-invariant almost Hermitian structures on Riemannian Lie groups.

\begin{definition}
    Let $G$ be a $2n$-dimensional Lie group. Let $J_e$ be a complex structure on its Lie algebra $\g$. Then the \emph{left-invariant almost complex structure} $J$ on $G$ induced by $J_e$ is defined by
    \begin{equation}
        J(X) = dL_p \circ J_e \circ dL_{p^{-1}}(X), \label{left-invariant complex structure}
    \end{equation}
    for any $p \in G$, $X \in T_pG$, where $L_p$ denotes the left translation on $G$ by $p$.
\end{definition}

\begin{definition}
     A \emph{Hermitian Lie group} is a Riemannian Lie group $(G, g)$ equipped with a left-invariant complex structure $J$ that is compatible with $g$. So in this case $G$ is both a Lie group and a Hermitian manifold.
\end{definition}

Given a left-invariant almost Hermitian structure $J$ on a Riemannian Lie group $(G, g)$, the integrability problem of $J$ can be simplified by the following well-known result.

\begin{proposition}
Let $J$ be a left-invariant almost complex structure on a $2n$-dimensional Lie group $G$. Then the Nijenhuis tensor $N_J$ vanishes if and only if $N_J$ is zero at the tangent space $T_eG\cong\g$. 
\end{proposition}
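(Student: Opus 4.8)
The plan is to exploit the left-invariance of $J$ to reduce the apparently point-by-point vanishing of the Nijenhuis tensor to a single algebraic condition at the identity. The key observation is that $N_J$ is a tensor field of type $(1,2)$, so its value at a point $p$ depends only on the values of the vector fields at $p$; combined with left-invariance, this should let me transport the whole computation from an arbitrary $p$ back to $e$ via the left translation $L_p$.

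First I would recall that the left-invariant almost complex structure $J$ is by construction $dL_p$-related to itself, i.e.\ $J \circ dL_p = dL_p \circ J$ for every $p \in G$, which is exactly formula \eqref{left-invariant complex structure}. Next I would take a basis of left-invariant vector fields $X_1,\dots,X_{2n}$ spanning $\g$; since $N_J$ is tensorial, it suffices to check $N_J(X_i,X_j)=0$ for these basis fields, because any vector fields can be written as $C^\infty(G)$-linear combinations of them and the tensorial property eliminates the derivative terms. The crucial step is then to show that each $X \mapsto N_J(X_i,X_j)$ is itself a left-invariant vector field. This follows because $dL_p$ commutes with $J$ and because $dL_p$ intertwines the Lie brackets of left-invariant fields (i.e.\ $dL_p[X,Y] = [dL_p X, dL_p Y]$, as left-invariant fields are $L_p$-related to themselves); feeding these two facts into the defining expression \eqref{nijenhuis} term by term shows $dL_p \bigl(N_J(X_i,X_j)_e\bigr) = N_J(X_i,X_j)_p$.

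Once $N_J(X_i,X_j)$ is known to be left-invariant, the conclusion is immediate: a left-invariant vector field vanishes everywhere if and only if it vanishes at $e$, so $N_J(X_i,X_j)\equiv 0$ on $G$ is equivalent to $N_J(X_i,X_j)_e=0$ in $\g$. Since the basis fields span every tangent space and $N_J$ is tensorial, this is equivalent to $N_J$ vanishing identically on $G$ iff it vanishes on $T_eG \cong \g$, which is the claim.

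The main obstacle, and the only place requiring genuine care, is verifying that $J$ and the Lie bracket interact correctly with $dL_p$ on the four terms of \eqref{nijenhuis}: one must confirm that $J$ being left-invariant really does give $dL_p \circ J = J \circ dL_p$ as endomorphisms at every point (not merely at $e$), and that the bracket terms such as $[JX_i, X_j]$ remain left-invariant so that $dL_p$ can be pulled through them. Both facts are standard, but the bracket compatibility relies on $JX_i$ being left-invariant whenever $X_i$ is, which is precisely guaranteed by left-invariance of $J$; I would state this as the decisive lemma feeding the term-by-term check.
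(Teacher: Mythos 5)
Your proposal is correct and follows essentially the same route as the paper: both arguments extend to left-invariant vector fields, use the tensoriality of $N_J$, and pull $dL_p$ through each of the four terms of \eqref{nijenhuis} via the left-invariance of $J$ and of the bracket, yielding $N_J(X,Y)_p = dL_p\bigl(N_J(X,Y)_e\bigr)$. Your phrasing of the key fact as ``$N_J$ of left-invariant fields is a left-invariant vector field'' is just a repackaging of the paper's identity, so the two proofs coincide in substance.
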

\begin{proof}
Let $p$ be an element of $G$ and $X_p, Y_p$ be arbitrary vectors in $T_pG$. Then there exist left-invariant vectors fields $X, Y$ with the values $X_p, Y_p$ at $p$, respectively. Therefore
\begin{align*}
    N_J(X_p, Y_p) &= N_J(X, Y)(p)\\
    &= [X_p, Y_p] + J[JX_p, Y_p] + J[X_p, JY_p] - [JX_p, JY_p].
\end{align*}
Since the vector fields $X, Y$ are left-invariant, we have
\begin{align*}
    J[JX_p, Y_p] &= dL_p \circ J \circ dL_{p^{-1}}([dL_p \circ J \circ dL_{p^{-1}}(X_p), Y_p])\\
    &= dL_p \circ J ([J \circ dL_{p^{-1}}(X_p), dL_{p^{-1}}(Y_p)])\\
    &= dL_p(J[J X_e, Y_e])
\end{align*}
and similarly for the other terms that
\begin{align*}
    [X_p, Y_p] &= dL_p([X_e, Y_e]),\\
    J[X_p, JY_p] &= dL_p(J[X_e, JY_e]),\\
    [JX_p, JY_p] &= dL_p([JX_e, JY_e]).
\end{align*}
Therefore we have
\[
N_J(X_p, Y_p) = dL_p(N_J(X_e, Y_e))
\]
which accomplishes the proof.

\end{proof}

\section{A Classification of Almost Hermitian Structures}

In this section, we present the famous classification of almost Hermitian structures from the paper \cite{Gra}, where the authors A. Gray and L. M. Hervella classify them into 16 different types.\smallskip

For an almost Hermitian manifold $(M^{2n}, g, J)$, the K\"{a}hler form $\omega$ is the differential 2-form defined by
\begin{equation}
    \omega = g(JY, Z), \,\,\,\,\,\,\,\,\,\, Y, Z \in C^\infty(TM).\label{kaehler form}
\end{equation}
Denote the Levi-Civita connection of $M$ by $\nabla$. Then there is a well-defined 3-covariant tensor field $\nabla\omega$ on $M$ defined by
\begin{equation}
    (\nabla_X\omega)(Y, Z) = X(\omega(Y, Z)) - \omega(\nabla_XY, Z) - \omega(Y, \nabla_XZ),\label{nabla omega}
\end{equation}
where $X, Y, Z \in C^\infty(TM)$.\smallskip

\begin{proposition}\label{symmetry of nabla}
Let $(M^{2n}, g, J)$ be an almost Hermitian manifold, then the K\"{a}hler form $\omega$ satisfies
\begin{enumerate}[label=(\roman*)]
    \item $(\nabla_X\omega)(Y, Z) = -(\nabla_X \omega )(Z, Y),$
    \item $(\nabla_X\omega)(Y, Z) = -(\nabla_X\omega)(JY, JZ),$
\end{enumerate}
for any vector fields $X, Y, Z$ on $M$.
\end{proposition}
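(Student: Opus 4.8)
The plan is to reduce both identities to properties of the tensor $\nabla_X J$, which is easier to handle than $\nabla_X\omega$ directly. The key preliminary step is to establish the auxiliary identity
\[
(\nabla_X\omega)(Y,Z) = g\big((\nabla_X J)Y,\, Z\big),
\]
valid on any almost Hermitian manifold. To obtain this I would start from the defining formula \eqref{nabla omega}, substitute $\omega(\cdot,\cdot) = g(J\cdot,\cdot)$ from \eqref{kaehler form} in every term, and expand $X(g(JY,Z))$ using the metric-compatibility of the Levi-Civita connection, $\nabla g = 0$. Combining this with the Leibniz rule $\nabla_X(JY) = (\nabla_X J)Y + J\nabla_X Y$, the contributions involving $J\nabla_X Y$ and $JY$ cancel exactly against the two subtracted terms in \eqref{nabla omega}, leaving only $g((\nabla_X J)Y, Z)$.

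Granting this identity, part (i) follows most cleanly from the antisymmetry of $\omega$ itself. First I would record that $\omega(Y,Z) = -\omega(Z,Y)$, which is a consequence of the compatibility $g(JY,JZ) = g(Y,Z)$ together with $J^2 = -\mathrm{id}$ and the symmetry of $g$. Since covariant differentiation preserves the symmetry type of a tensor, $\nabla_X\omega$ inherits this antisymmetry in its two lower arguments; concretely, swapping $Y$ and $Z$ in \eqref{nabla omega} and applying $\omega(A,B) = -\omega(B,A)$ to each of the three resulting terms yields $(\nabla_X\omega)(Z,Y) = -(\nabla_X\omega)(Y,Z)$ after collecting signs.

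For part (ii) I would combine the auxiliary identity with the fact that $\nabla_X J$ anticommutes with $J$. This anticommutation is obtained by covariantly differentiating the constant relation $J^2 = -\mathrm{id}$: the product rule for the composition gives $(\nabla_X J)(JY) + J\big((\nabla_X J)Y\big) = 0$, that is, $(\nabla_X J)(JY) = -J\big((\nabla_X J)Y\big)$. Substituting into
\[
(\nabla_X\omega)(JY,JZ) = g\big((\nabla_X J)(JY),\, JZ\big)
\]
and then invoking the compatibility $g(JA,JB) = g(A,B)$ with $A = (\nabla_X J)Y$ and $B = Z$ produces $-g((\nabla_X J)Y, Z) = -(\nabla_X\omega)(Y,Z)$, which is exactly the claimed relation.

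The individual computations are routine; the one step requiring care is the cancellation in the auxiliary identity, where the Leibniz expansion of $\nabla_X(JY)$ and the metric-compatibility term must be kept aligned so that precisely the two unwanted terms disappear. Everything else is bookkeeping with the two algebraic relations $J^2 = -\mathrm{id}$ and $g(J\cdot,J\cdot) = g(\cdot,\cdot)$.
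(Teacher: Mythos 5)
Your proof is correct, and for part (i) it coincides with the paper's argument: the antisymmetry $\omega(Y,Z)=-\omega(Z,Y)$ (from $g(J\cdot,J\cdot)=g(\cdot,\cdot)$ and $J^2=-\mathrm{id}$) is inherited term by term through the definition \eqref{nabla omega}. For part (ii), however, you take a genuinely different route. You first establish the identity $(\nabla_X\omega)(Y,Z)=g((\nabla_XJ)Y,Z)$, then differentiate the constant tensor relation $J^2=-\mathrm{id}$ to obtain the anticommutation $(\nabla_XJ)(JY)=-J\big((\nabla_XJ)Y\big)$, and close with the compatibility $g(JA,JB)=g(A,B)$; all three steps are sound. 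The paper instead computes directly, using only metric compatibility, that $(\nabla_X\omega)(Y,JY)=0$ for every $Y$, and then polarizes to deduce (ii) --- note in passing that the paper's displayed polarization identity $(\nabla_X\omega)(Y+Z,Y+Z)=0$ is a typo for $(\nabla_X\omega)\big(Y+Z,J(Y+Z)\big)=0$; as literally written it is trivially true by (i) and yields nothing. Your approach costs slightly more setup but buys a reusable tool: the identity $(\nabla_X\omega)(Y,Z)=g((\nabla_XJ)Y,Z)$ is exactly what the paper needs again (and re-derives) in its proof of the Gray--Hervella classification theorem later in Chapter 2, and the anticommutation of $\nabla_XJ$ with $J$ is a structural fact worth isolating. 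The paper's polarization argument, by contrast, is self-contained and never needs to introduce the tensor $\nabla_XJ$ at all.
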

\begin{proof}
The first equality follows directly from \eqref{kaehler form} and \eqref{nabla omega}. Now we consider (ii). Since the Levi-Civita connection is compatible with the metric $g$, we have
\begin{align*}
    (\nabla_X\omega)(Y, JY) &= X(\omega(Y, JY)) - \omega(\nabla_XY, JY) - \omega(Y, \nabla_XJY)\\
    &= X(g(JY, JY)) - g(J\nabla_XY, JY) - g(JY, \nabla_XJY)\\
    &= X(g(JY, JY)) - g(\nabla_XY, Y) - g(JY, \nabla_XJY)\\
    &= X(g(JY, JY)) - \frac{1}{2}X(g(Y, Y)) - \frac{1}{2}X(g(JY, JY))\\
    &= 0.
\end{align*}
Then the equality (ii) follows by expanding $(\nabla_X\omega)(Y + Z, Y + Z) = 0$.

\end{proof}

Let $V$ be a $2n$-dimensional real vector space with an inner product $\langle,\rangle$ and an almost complex structure $J$ such that $\langle X,Y \rangle = \langle JX,JY \rangle$, for any $X, Y \in V$. Let $V^*$ be the dual space of $V$. By a direct calculation we see that $W$ defined by 
\begin{align}
    W = \{\alpha \in \otimes^3 V^*|&\alpha(X, Y, Z) = -\alpha(X, Z, Y),\nonumber\\  &\alpha(X, Y, Z) = -\alpha(X, JY, JZ) \, \text{ for all } X, Y, Z \in V\}.\label{def of w}
\end{align}
is a linear subspace of $\otimes^3 V^*$. Then there is a natural inner product on $W$ defined by
\[
\langle \langle \alpha, \beta \rangle \rangle = \sum_{j, k, l = 1}^{2n}\alpha(E_j, E_k, E_l)\beta(E_j, E_k, E_l),
\]
for any orthonormal basis $\{E_1, ..., E_{2n}\}$ of $V$. Following the notations in \cite{Gra}, for $\alpha \in W$, we define $\Tilde{\alpha}\in V^*$ by
\begin{equation}
    \Tilde{\alpha}(X) = -\sum_{j = 1}^{2n}\alpha(E_i, E_i, X), \label{def of tilde alpha}
\end{equation}
where $\{E_1, \dots, E_{2n}\}$ is an arbitrary orthonormal basis for $V$. We also define the following four subspaces of $W$ by
\begin{align*}
    W_1 = \{\alpha \in W |\, &\alpha(X, X, Z) = 0 \, \text{ for all } X, Z \in V\},\\
    W_2 = \{\alpha \in W |\, &\alpha(X, Y, Z) + \alpha(Z, X, Y) + \alpha(Y, Z, X) = 0 \, \text{ for all } X, Y, Z \in V\},\\
    W_3 = \{\alpha \in W |\, &\alpha(X, Y, Z) - \alpha(JX, JY, Z) = 0, \Tilde{\alpha}(Z) = 0 \, \text{ for all } X, Y, Z \in V\},\\
    W_4 = \{\alpha \in W |\, &\alpha(X, Y, Z) = -\frac{1}{2(n - 1)}\big(\langle X, Y\rangle \Tilde{\alpha}(Z)-\langle X, Z\rangle\Tilde{\alpha}(Y)\\&-\langle X, JY\rangle\Tilde{\alpha}(JZ)+\langle X, JZ\rangle\Tilde{\alpha}(JY)\big) \, \text{ for all } X, Y, Z \in V\}.
\end{align*}

Since $J$ is compatible with the inner product of $V$, we can equip it with a natural structure of a Hermitian vector space of complex dimension $n$ by setting
\[
(a + bi)X = aX + bJX,
\]
where $a, b \in \mathbb{R}$ and $X \in V$. Hence, if we fix an orthonormal basis $\{E_1, ..., E_n\}$ for $V$, there is a natural representation $\rho$ of the unitary group $\U{n}$ on $\otimes^3 V^*$ defined by
\[
\rho(A)(\phi)(X, Y, Z) = \phi(AX, AY, AZ),
\]
where $A \in \U{n}$, $\phi \in \otimes^3 V^*$ and $X, Y, Z \in V$.\smallskip

By Definition \ref{def of w}, we see that $W$ is invariant under the action of the unitary group $\U{n}$. So $\rho$ induces a representation of $\U{n}$ on the vector space $W$. The following result is vital to the classification.

\begin{theorem}[\cite{Gra}]\label{decomposition of W}
The vector space $W$ has the following orthogonal decomposition
\[
W = W_1 \oplus W_2 \oplus W_3 \oplus W_4,
\]
where each component $W_j$ is invariant under the action of \U{n}. Furthermore, the induced representation of $\U{n}$ on each $W_j$ is irreducible.
\end{theorem}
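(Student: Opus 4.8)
The plan is to recognize Theorem \ref{decomposition of W} as a statement in the representation theory of the unitary group $\U{n}$ and to prove it by splitting $W$ into its isotypic components. Throughout I regard $V$ as a complex vector space of dimension $n$ via $J$, so that $V^*$ and $\Lambda^2 V^*$ carry the natural $\U{n}$-actions described before the statement, and I work with the complexified representations to identify irreducible pieces by their highest weights.

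First I would reduce $W$ to a single tensor product. Condition (i) in \eqref{def of w} lets me view each $\alpha \in W$ as a map $X \mapsto \beta_X$, where $\beta_X(Y,Z) = \alpha(X,Y,Z)$ lies in $\Lambda^2 V^*$; thus $W \subseteq V^* \otimes \Lambda^2 V^*$. The endomorphism $\beta \mapsto \beta(J\cdot,J\cdot)$ of $\Lambda^2 V^*$ is an involution whose $(+1)$- and $(-1)$-eigenspaces are the $J$-invariant two-forms (the real form of $\Lambda^{1,1}$, containing $\omega$) and the $J$-anti-invariant ones $A$ (the real form of $\Lambda^{2,0}\oplus\Lambda^{0,2}$). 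Condition (ii) says precisely that $\beta_X \in A$ for every $X$, which is the same symmetry already recorded for $\nabla\omega$ in Proposition \ref{symmetry of nabla}. Hence $W \cong V^* \otimes A$ as $\U{n}$-modules.

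The core step is then to decompose this tensor product. I would complexify and use the standard splitting of $A\otimes\cn$ into $\Lambda^{2,0}$, its conjugate, and the corresponding splitting of $V^*$, and then apply the Littlewood--Richardson rule (equivalently, an explicit highest-weight analysis) to a factor such as $V^*\otimes\Lambda^{2,0}$. This breaks into the totally skew piece $\Lambda^{3,0}$, a mixed-symmetry Schur-functor piece, and a contraction (trace) piece isomorphic to the standard representation, with the conjugate factor contributing the complex-conjugate modules. I then match these to the four real subspaces of \eqref{def of w}: $W_1$, cut out by $\alpha(X,X,Z)=0$, is the totally skew nearly-K\"ahler part corresponding to $\Lambda^{3,0}\oplus\Lambda^{0,3}$; $W_4$, cut out by the explicit formula involving $\tilde\alpha$, is the contraction piece isomorphic to $V^*$ in which the Lee form $\tilde\alpha$ of \eqref{def of tilde alpha} lives; and $W_2$ (cyclic sum zero) together with $W_3$ ($J$-invariance in the first pair together with $\tilde\alpha=0$) are the two remaining mixed-symmetry modules. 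Orthogonality of the four summands under $\langle\langle\,,\rangle\rangle$ follows either by a short direct computation from the defining relations or, more cleanly, from Schur's lemma once they are seen to be pairwise non-isomorphic $\U{n}$-modules.

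The main obstacle is the final assertion of irreducibility, together with completeness $W = W_1\oplus W_2\oplus W_3\oplus W_4$. For this I would identify each complexified summand with a single irreducible $\U{n}$-module by computing its highest weight, thereby ruling out any further splitting and confirming that the real $W_j$ are genuine real forms of irreducibles; a dimension count then verifies that the four pieces exhaust $W$. Care is needed for small $n$, where some summands degenerate --- for instance $\Lambda^{3,0}$ vanishes when $n<3$, so $W_1=0$ there --- but in the range relevant to this thesis ($2n=8$, hence $n=4$) all four modules are nonzero and the decomposition is exactly as stated.
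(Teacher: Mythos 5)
You should first be aware that the thesis itself contains no proof of this statement: the theorem is quoted from Gray--Hervella \cite{Gra} and used as a black box. So there is no proof of the paper's own to compare yours against; your proposal can only be measured against the argument in the literature, which is indeed representation-theoretic and broadly along the lines you sketch.

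Within that framework, your reduction $W\cong V^*\otimes A$, with $A$ the $J$-anti-invariant $2$-forms, is correct, and so are your three closing steps: orthogonality of pairwise non-isomorphic irreducible summands via Schur's lemma (since $\langle\langle\cdot,\cdot\rangle\rangle$ is $\U{n}$-invariant), irreducibility of the real forms because none of the complex summands is self-conjugate, and completeness by comparing dimensions against $\dimension W = 2n^2(n-1)$. The concrete gap is in the middle step. Write $V^*\otimes\cn=\Lambda^{1,0}\oplus\Lambda^{0,1}$ and $A\otimes\cn=\Lambda^{2,0}\oplus\Lambda^{0,2}$. The factor $\Lambda^{1,0}\otimes\Lambda^{2,0}$ decomposes by Pieri's rule into exactly \emph{two} irreducibles, namely $\Lambda^{3,0}$ and the mixed Schur module; it has no trace piece at all, because all of its indices are of the same type and $\U{n}$-equivariant contractions pair only $(1,0)$ against $(0,1)$. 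The trace piece isomorphic to the standard module --- the home of $\Tilde{\alpha}$, giving $W_4$ --- and the complementary traceless module --- giving $W_3$ --- arise instead from the cross-type factor $\Lambda^{0,1}\otimes\Lambda^{2,0}$ and its conjugate $\Lambda^{1,0}\otimes\Lambda^{0,2}$. Your sketch extracts skew, Schur and trace from a single factor and lets ``the conjugate factor'' supply only complex conjugates; run literally, this produces three isotypic components rather than four, leaves $W_3$ unmatched on the representation-theoretic side, and would make your own concluding dimension count fail (which is how you would have caught it). Once the bookkeeping is corrected --- two irreducibles from each type of factor, with conjugates merely completing the real forms --- the identifications you state ($W_1$ the totally skew part, $W_2$ the kernel of skew-symmetrization, i.e. cyclic sum zero, $W_3$ the traceless piece, $W_4$ the Lee-form piece) are exactly right, as is your caveat about degeneration for $n<3$, which is irrelevant here since $n=4$.
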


Let $(M^{2n}, g, J)$ be an almost Hermitian manifold and $p$ be an element of $M$. By comparing Proposition \ref{symmetry of nabla} with \eqref{def of w}, we can put $V = T_pM$, then $(\nabla\omega)|_p \in W$. We denote the four irreducible components of this $W$ by $W_{p, j}$, $j = 1, 2, 3, 4$.\smallskip

Now we can classify all of the almost Hermitian manifolds according to $\nabla\omega$.
\begin{definition}
The class of almost Hermitian manifolds $(M^{2n}, g, J)$ whose K\"{a}hler form $\omega$ satisfies
\[
(\nabla\omega)|_p \in W_{p, j},
\]
for each point $p \in M$ is denoted by $\mathcal{W}_j$, where $j = 1, 2, 3, 4$. And $W_{p, j}$ can be replaced by $W_{p, j} \oplus W_{p, k}$ when the class is denoted as $\mathcal{W}_j \oplus \mathcal{W}_k$, similarly for the other direct sums. In particular, we denote $\mathcal{K} = \bigcap\limits_{j = 1}^4 \mathcal{W}_j$.
\end{definition}

Hence, we have 16 classes of different almost Hermitian structures. In the paper \cite{Gra}, Gray and Hervella prove that each class defined in this way coincides with some well-known class of almost Hermitian manifolds. We give an brief account of their proofs for the case $\mathcal{W}_3 \oplus \mathcal{W}_4$, $\mathcal{W}_3$, $\mathcal{W}_4$ and $\mathcal{K}$.
\begin{definition}
An almost Hermitian manifold $(M, g, J)$ is said to be
\begin{enumerate}[label=(\roman*)]
    \item \emph{semi-K\"{a}hler} if $\delta\omega = 0$,
    \item \emph{locally conformal K\"{a}hler} if the tensor field $\mu = 0$,
    \item \emph{K\"{a}hler} if $J$ is integrable and $d\omega = 0$.
\end{enumerate}
Here, the operator $\delta$ in (i) is the codifferential operator given by
\begin{equation}
    \delta: \Omega^{k + 1}(M) \to \Omega^{k}(M),\;\;\;\;
    \delta = - \sum_{j = 1}^{2n}E_j \mathbin{\lrcorner} \nabla_{E_j},\label{codifferential}
\end{equation}
see page 101 of \cite{Mor}. The (1, 2) tensor field $\mu$ in (ii) is defined by
\begin{align}
&\langle\mu(X, Y), Z \rangle\nonumber\\
= \;&(\nabla_X\omega)(Y, Z) + \frac{1}{2(n - 1)}\{\langle X, Y\rangle\cdot\delta\omega(Z) - \langle X, Z\rangle\cdot\delta\omega(Y)\nonumber\\ &\qquad \qquad - \langle X, JY\rangle\cdot\delta\omega(JZ) + \langle X,JZ\rangle\cdot\delta\omega(JY)\},
\end{align}
for any $X, Y, Z \in C^\infty(TM)$. It is a locally conformal invariant of almost Hermitian manifolds and vanishes when $M$ is K\"{a}hler (see Lemma 4.1 in \cite{Gra}). Besides, it can be verified that the condition $d\omega = 0$ in (iii) is equivalent to $\nabla\omega = 0$.
\end{definition}

\begin{theorem}[\cite{Gra}]
An almost Hermitian manifold $(M, g, J)$ belongs to $\mathcal{W}_3 \oplus \mathcal{W}_4$, $\mathcal{W}_3$, $\mathcal{W}_4$ or $\mathcal{K}$ if and only if $M$ is Hermitian, Hermitian semi-K\"{a}hler, locally conformal K\"{a}hler or K\"{a}hler, respectively.
\end{theorem}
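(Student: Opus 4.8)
The plan is to translate each of the four geometric conditions into a statement about which of the irreducible summands $W_{p,1},\dots,W_{p,4}$ of $(\nabla\omega)|_p$ occur, and then to read off the four equivalences by intersecting the corresponding subspaces. The computational starting point is the identity $(\nabla_X\omega)(Y,Z)=g((\nabla_XJ)Y,Z)$, obtained by differentiating \eqref{kaehler form} and using that $\nabla$ is metric; together with Proposition \ref{symmetry of nabla} it shows $(\nabla\omega)|_p\in W$, so that Theorem \ref{decomposition of W} applies pointwise. Writing $\alpha=\nabla\omega$, I regard the operators $\delta$, $\mu$ and $N_J$ as $\U{n}$-equivariant maps out of $W$; by Schur's lemma, using the irreducible decomposition of Theorem \ref{decomposition of W}, their kernels are unions of the summands $W_j$, so it suffices to compute them on each $W_j$.

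First I would treat the trace. Comparing \eqref{def of tilde alpha} with \eqref{codifferential} in an orthonormal frame gives $\widetilde{\nabla\omega}=\delta\omega$, and since among the four summands only $W_4$ is of vectorial $\U{n}$-type, the map $\alpha\mapsto\widetilde{\alpha}$ vanishes on $W_1\oplus W_2\oplus W_3$ and restricts to an isomorphism $W_4\to V^*$ (for $\alpha\in W_4$ with Lee form $\theta$ one finds $\widetilde{\alpha}=\theta$). Hence $\delta\omega=0$ precisely when the $W_4$-component of $\nabla\omega$ vanishes, i.e. when $\nabla\omega\in W_1\oplus W_2\oplus W_3$; this is the semi-K\"ahler condition. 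Moreover the orthogonal projection $\pi_{W_4}(\alpha)$ is exactly the $W_4$-tensor built from $\widetilde{\alpha}$, so comparing the defining formula of $\mu$ with that of $W_4$ (with $\widetilde{\alpha}=\delta\omega$) yields $\mu=\nabla\omega-\pi_{W_4}(\nabla\omega)$. Therefore $\mu=0$ if and only if $\nabla\omega\in W_4$, which is the class $\mathcal{W}_4$, giving the locally conformal K\"ahler equivalence.

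The crux is the Hermitian case. Substituting $[A,B]=\nabla_AB-\nabla_BA$ into \eqref{nijenhuis} and rewriting through $(\nabla_A\omega)(B,C)=g((\nabla_AJ)B,C)$ and the identity $\alpha(A,B,JC)=\alpha(A,JB,C)$ (a consequence of the second defining relation of $W$) produces a tensorial identity
\[
g(N_J(X,Y),Z)=-\alpha(JX,Y,Z)+\alpha(JY,X,Z)-\alpha(X,JY,Z)+\alpha(Y,JX,Z),
\]
whose right-hand side is a $\U{n}$-equivariant map $\Phi(\alpha)$. The key algebraic point is that the involution $\alpha\mapsto\alpha(J\,\cdot\,,J\,\cdot\,,\cdot)$ acts as $+1$ on $W_3\oplus W_4$ and as $-1$ on $W_1\oplus W_2$; on the $+1$ eigenspace one has $\alpha(JX,Y,Z)=-\alpha(X,JY,Z)$, so the four terms of $\Phi$ cancel in pairs and $N_J$ vanishes, whereas on $W_1\oplus W_2$ the same substitution gives $\Phi(\alpha)=2\big(\alpha(Y,JX,Z)-\alpha(X,JY,Z)\big)$, which is nonzero on each summand (as witnessed by the non-integrability of the generic nearly-K\"ahler and almost-K\"ahler structures). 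Thus $N_J\equiv0$ if and only if $\nabla\omega\in W_3\oplus W_4$, the class $\mathcal{W}_3\oplus\mathcal{W}_4$, giving the Hermitian equivalence; intersecting with the previous paragraph, $W_3=(W_3\oplus W_4)\cap(W_1\oplus W_2\oplus W_3)$ corresponds to $N_J=0$ together with $\delta\omega=0$, i.e. Hermitian semi-K\"ahler.

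Finally, $\mathcal{K}=\bigcap_j\mathcal{W}_j$ forces $\nabla\omega\in\bigcap_jW_j=\{0\}$, so the class $\mathcal{K}$ is exactly the condition $\nabla\omega=0$, which trivially gives $N_J\equiv0$ and $d\omega=0$. For the converse I would use that on an integrable $J$ the stated equivalence $d\omega=0\Leftrightarrow\nabla\omega=0$ holds (the classical fact that a Hermitian manifold with closed K\"ahler form is K\"ahler): since $N_J\equiv0$ places $\nabla\omega$ in $W_3\oplus W_4$ by the previous paragraph, adding $d\omega=0$ forces $\nabla\omega=0$, whence $M\in\mathcal{K}$. The main obstacle throughout is the Nijenhuis identity above together with the verification that $\Phi$ is injective on $W_1\oplus W_2$ (equivalently, nonzero on each of the irreducible summands $W_1,W_2$): this is the one step that genuinely relies on the irreducibility asserted in Theorem \ref{decomposition of W} rather than on routine tensor algebra.
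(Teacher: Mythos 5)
Your proposal is correct in outline, and for three of the four classes it runs parallel to the paper's proof: both arguments identify the trace $\widetilde{\alpha}$ of $\alpha=\nabla\omega$ with $\delta\omega$, both read $\mu$ as $\nabla\omega$ minus the $W_4$-tensor built from $\delta\omega$, and both settle $\mathcal{K}$ via $\bigcap_j W_j=\{0\}$. The genuine divergence is in the integrability equivalence. Your identity $g(N_J(X,Y),Z)=-\alpha(JX,Y,Z)+\alpha(JY,X,Z)-\alpha(X,JY,Z)+\alpha(Y,JX,Z)$ is correct (and, unlike the paper, you derive it rather than quote it), and it gives the direction $\nabla\omega\in W_3\oplus W_4\Rightarrow N_J=0$ by pure algebra. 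But for the converse the paper uses a \emph{second} Gray--Hervella identity, $2\langle(\nabla_XJ)Y-(\nabla_{JX}J)(JY),Z\rangle=\langle N_J(X,Y),JZ\rangle-\langle N_J(Y,Z),JX\rangle+\langle N_J(Z,X),JY\rangle$, whose left-hand side equals $(\nabla_X\omega)(Y,Z)-(\nabla_{JX}\omega)(JY,Z)$; this makes $N_J\equiv0\Rightarrow\nabla\omega\in W_3\oplus W_4$ a one-line tensor computation. You replace that identity by Schur's lemma plus witnesses, which is sound in principle but incurs two obligations the paper avoids: (i) the claim that $W_1\oplus W_2$ is the $-1$ eigenspace of $\alpha\mapsto\alpha(J\cdot,J\cdot,\cdot)$ is immediate for $W_1$ but \emph{not} for $W_2$ as defined by the cyclic condition, so it needs its own computational or Schur-type argument; (ii) the nonvanishing of your map $\Phi$ on $W_1$ and on $W_2$ is justified only by the existence of non-integrable nearly-K\"ahler and almost-K\"ahler manifolds (e.g. $\sn^6$ and the Kodaira--Thurston manifold), i.e. by nontrivial external geometry, and for $W_2$ it additionally presupposes the identification of $d\omega=0$ with the class $\mathcal{W}_2$. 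So your representation-theoretic organization is illuminating and completable, but the paper's two-identity argument is shorter and self-contained. One small dividend of your route: since $d\omega=0$ means $\nabla\omega\in W_2$, the K\"ahler case closes as $W_2\cap(W_3\oplus W_4)=\{0\}$, avoiding the appeal to the classical fact that Hermitian together with $d\omega=0$ implies $\nabla\omega=0$.
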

\begin{proof}
First we show that
\[
W_3 \oplus W_4 = \{\alpha \in W |\, \alpha(X, Y, Z) - \alpha(JX, JY, Z) = 0 \,\text{ for all } X, Y, Z \in V\}.
\]
If $\alpha \in W$ satisfies $\alpha(X, Y, Z) - \alpha(JX, JY, Z) = 0$, we define $\alpha_1, \alpha_2$ by
\begin{align*}
    \alpha_2(X, Y, Z) = -\frac{1}{2(n - 1)}&\big\{\langle X, Y\rangle \Tilde{\alpha}(Z)-\langle X, Z\rangle\Tilde{\alpha}(Y)\\&-\langle X, JY\rangle\Tilde{\alpha}(JZ)+\langle X, JZ\rangle\Tilde{\alpha}(JY)\big\},\\
    \alpha_1(X, Y, Z) = \alpha(X, Y, Z) &- \alpha_2(X, Y, Z).
\end{align*}
By a direct calculation, we have 
\[
\Tilde{\alpha_2}(Z) = \Tilde{\alpha}(Z).
\]
Hence, $\alpha = \alpha_1 + \alpha_2$ is a well-defined decomposition of $\alpha$ in $W_3 \oplus W_4$. The inclusion in the opposite direction can be verified in straightforward way.\smallskip

Gray and Hervella prove the following equalities between $N_J$ and $\omega$.
\[
\langle N_J(X, Y), JZ\rangle = \langle (\nabla_XJ)(Y) - (\nabla_{JX}J)(JY) - (\nabla_YJ)(X) + (\nabla_{JY}J)(JX), Z \rangle,
\]
\begin{align*}	
&2\langle (\nabla_XJ)(Y) - (\nabla_{JX}J)(JY) , Z\rangle\\ = \;&\langle N_J(X, Y), JZ\rangle - \langle N_J(Y, Z), JX\rangle + \langle N_J(Z, X), JY\rangle.
\end{align*}
Since
\begin{align*}
    \langle (\nabla_XJ)(Y), Z\rangle &= \langle \nabla_XJY - J\nabla_XY, Z \rangle\\
    &= (\nabla_X\omega)(Y, Z),
\end{align*}
we have
\[
\langle (\nabla_XJ)(Y) - (\nabla_{JX}J)(JY), Z\rangle = (\nabla_X\omega)(Y, Z) - (\nabla_{JX}\omega)(JY, Z).
\]
Combining these results together, we see that $M$ is Hermitian if and only if
\[
(\nabla_X\omega)(Y, Z) - (\nabla_{JX}\omega)(JY, Z) = 0.
\]
So $\mathcal{W}_3 \oplus \mathcal{W}_4$ is the class of Hermitian manifolds.\smallskip

By comparing \eqref{codifferential} with the definition of $\Tilde{\alpha}$, we see that a manifold belongs to $\mathcal{W}_3$ if and only if $\delta\omega = 0$ i.e. $\mathcal{W}_3$ is the class of Hermitian semi-K\"{a}hler manifolds. Similarly, a manifold belongs to $\mathcal{W}_4$ if and only if $\mu = 0$ i.e. $\mathcal{W}_4$ is the class of locally conformal K\"{a}hler manifolds. Finally, $\mathcal{K}$ is the class of K\"{a}hler manifolds since it contains exactly the manifolds satisfying $\nabla\omega = 0$.

\end{proof}

\chapter{Hermitian Foliations}

In this chapter, we give a detailed account of the 13-dimensional family of Lie groups $G^8$ introduced in Chapter 1. We define a left-invariant Hermitian structure $J$ on $G^8$ and investigate the conditions for $J$ to be integrable.
\section{Left-invariant Complex Structures on $\su{2} \oplus \su{2}$}

The left-invariant complex structures on $\su{2} \oplus \su{2}$ have been studied by L. Magnin in his paper \cite{Mag}. Specifically, he proves the following result. 
\begin{proposition}
The set of integrable left-invariant almost complex structures on $\su2 \oplus \su2$ consists of matrices of the form
\begin{equation}
\normalsize{
\left(\begin{array}{@{}cccccc@{}}
	\lambda_1^2\xi & -\lambda_1\mu_1\xi+\nu_1 & \lambda_1\nu_1\xi+\mu_1 & \eta\lambda_1\lambda_2 & -\eta\lambda_1\mu_2 & \eta\lambda_1\nu_2 \\
	-\lambda_1\mu_1\xi-\nu_1 & \mu_1^2\xi & \lambda_1-\mu_1\nu_1\xi & -\eta\mu_1\lambda_2 & \eta\mu_1\mu_2 & -\eta\mu_1\nu_2 \\
	\lambda_1\nu_1\xi-\mu_1 & -\lambda_1-\mu_1\nu_1\xi & \nu_1^2\xi & \eta\nu_1\lambda_2 & -\eta\nu_1\mu_2 & \eta\nu_1\nu_2 \\
	-\frac{\xi^2+1}{\eta}\lambda_1\lambda_2 & \frac{\xi^2+1}{\eta}\mu_1\lambda_2 & -\frac{\xi^2+1}{\eta}\nu_1\lambda_2 & -\lambda_2^2\xi & \lambda_2\mu_2\xi + \nu_2 & -\lambda_2\nu_2\xi + \mu_2 \\
	\frac{\xi^2+1}{\eta}\lambda_1\mu_2 & -\frac{\xi^2+1}{\eta}\mu_1\mu_2 & \frac{\xi^2+1}{\eta}\nu_1\mu_2 & \lambda_2\mu_2\xi - \nu_2 & -\mu_2^2\xi & \lambda_2 + \mu_2\nu_2\xi \\
	-\frac{\xi^2+1}{\eta}\lambda_1\nu_2 & \frac{\xi^2+1}{\eta}\mu_1\nu_2 & -\frac{\xi^2+1}{\eta}\nu_1\nu_2 & -\lambda_2\nu_2\xi-\mu_2 & -\lambda_2 + \mu_2\nu_2\xi & -\nu_2^2\xi 
\end{array}\right),
}\label{matrix JV}
\end{equation}
where
\[
(\xi, \eta) \in \rn \times \rn^*,\;\;
\left(\begin{array}{@{}c@{}}
    \lambda_j \\
    \mu_j \\
    \nu_j\\
  \end{array}\right) \in \sn ^2, \;\; j = 1, 2.
\]
Furthermore, these matrices form a closed 6-dimensional smooth submanifold of $\rn^{36}$ diffeomorphic to $\rn \times \rn^* \times \sn^2 \times \sn^2$.\label{prop 2.2}
\end{proposition}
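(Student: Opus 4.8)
The plan is to reduce the statement to a purely Lie-algebraic classification on $\g = \su{2}\oplus\su{2}$ and then to match the resulting family with the explicit matrix \eqref{matrix JV}. By the proposition asserting that a left-invariant Nijenhuis tensor is determined by its value at the identity, together with Theorem \ref{newlander nirenberg}, a left-invariant almost complex structure on the simply connected group with Lie algebra $\g$ is integrable exactly when, at the level of $\g$, one has $J^2=-\mathrm{id}$ and $N_J\equiv 0$, where $N_J$ is the Nijenhuis tensor \eqref{nijenhuis} evaluated on a basis of $\g$. Thus the task is to determine all $J\in\End(\g)$ satisfying these two conditions and to show that, written in a fixed orthonormal basis adapted to the two $\su{2}$ summands, they are precisely the matrices \eqref{matrix JV}.

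The efficient route to the hard (completeness) direction is to complexify. Writing $\g_{\cn}=\g\otimes\cn\cong\slc{2}\oplus\slc{2}$ and letting $\sigma$ denote the conjugation fixing the compact real form $\g$, an endomorphism $J$ with $J^2=-\mathrm{id}$ is encoded by its $(+i)$-eigenspace $\q\subset\g_{\cn}$, a $3$-dimensional complex subspace with $\g_{\cn}=\q\oplus\overline{\q}$, where $\overline{\q}=\sigma(\q)$; and the vanishing of $N_J$ is equivalent to $\q$ being a complex Lie subalgebra of $\g_{\cn}$. So I would classify the $3$-dimensional complex subalgebras $\q$ of $\slc{2}\oplus\slc{2}$ transverse to their conjugates, $\q\cap\overline{\q}=0$. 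A Goursat-type analysis through the two projections $\slc{2}\oplus\slc{2}\to\slc{2}$ shows that, apart from the conjugation-invariant summand $\slc{2}\oplus 0$ (which fails transversality), every such $\q$ is the graph of a Lie algebra isomorphism $\phi\colon\slc{2}\to\slc{2}$; since $\slc{2}$ is simple these are exactly the elements of $\Aut(\slc{2})\cong\mathrm{PSL}_2(\cn)$, a real $6$-dimensional group, matching the expected dimension of the family.

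With $\q=\mathrm{graph}(\phi)$, the transversality condition $\q\cap\overline{\q}=0$ becomes invertibility of $\phi-\sigma\phi\sigma$, which I expect to translate into the constraint $\eta\neq 0$ appearing in \eqref{matrix JV}. To recover the explicit matrix I would reconstruct $J$ from $\q$ in the adapted basis: the diagonal $3\times 3$ blocks should decompose as $\xi\,v_j v_j^{\mathsf T}-[v_j]_\times$ for unit vectors $v_j\in\sn^2$ (a rank-one symmetric part plus a cross-product/$\ad$-type skew part), while the off-diagonal blocks carry the factors $\eta$ and $(\xi^2+1)/\eta$ recording how $\phi$ intertwines the two summands. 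Reading off $(\xi,\eta,v_1,v_2)$ from these blocks yields both the parametrization and its inverse, so the map
\[
\Psi\colon\rn\times\rn^*\times\sn^2\times\sn^2\longrightarrow\rn^{36}
\]
sending $(\xi,\eta,v_1,v_2)$ to the matrix \eqref{matrix JV} is a smooth bijection onto the solution set.

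It remains to justify the manifold claim. The solution set is a closed real-algebraic subset of $\rn^{36}$ (the common zeros of the polynomial entries of $J^2+\mathrm{id}$ and of $N_J$), and I would verify that $\Psi$ is a smooth embedding with image exactly this set: injectivity follows from the parameter recovery above, and the immersion property from a rank computation of $d\Psi$. Closedness of the image, despite the non-closed domain factor $\rn^*$, comes from the blow-up $(\xi^2+1)/\eta\to\infty$ as $\eta\to0$, so no finite matrix lies in the closure but outside the image. I expect the decisive difficulty to be the Goursat step, specifically ruling out the mixed (Borel-type, solvable) subalgebras $\q$ on transversality grounds and arranging $\Aut(\slc{2})$ so that it maps cleanly onto the $(\xi,\eta,v_1,v_2)$ description; the passage from an abstract isomorphism $\phi$ to the precise entries of \eqref{matrix JV} is the most calculation-heavy part, though conceptually routine once the block decomposition $\xi\,v_j v_j^{\mathsf T}-[v_j]_\times$ has been identified.
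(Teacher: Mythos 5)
The paper itself offers no proof to compare against: Proposition \ref{prop 2.2} is imported verbatim from Magnin \cite{Mag}, who establishes it by direct computation with the Nijenhuis equations; so your proposal must stand on its own merits. Your first reduction is correct and standard: an integrable left-invariant almost complex structure on a group with Lie algebra $\g=\su2\oplus\su2$ is the same as a complex subalgebra $\q\subset\g_\cn\cong\slc2\oplus\slc2$ with $\q\cap\overline{\q}=0$, where $\overline{\q}=\sigma(\q)$ and $\sigma$ is the conjugation fixing $\g$. The fatal gap is your Goursat step: its conclusion is exactly backwards. The graph $\q=\{(x,\phi(x)):x\in\slc2\}$ of an automorphism $\phi\in\Aut(\slc2)$ is \emph{never} transverse to its conjugate. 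Indeed, every automorphism of $\slc2$ is inner, so $\phi=\Ad_g$ with $g\in\SLC2$, and since $\sigma(X)=-X^*$ one computes $\sigma\Ad_g\sigma=\Ad_{(g^*)^{-1}}$, whence
\[
\phi-\sigma\phi\sigma=\Ad_{(g^*)^{-1}}\bigl(\Ad_{g^*g}-\mathrm{id}\bigr).
\]
The matrix $h=g^*g$ is Hermitian positive definite with $\det h=1$, and its centralizer in $\slc2$ is never zero: it is all of $\slc2$ if $h=I$, and otherwise it is the Cartan subalgebra diagonalized by the eigenbasis of $h$. So $\Ad_h-\mathrm{id}$ always has a kernel, any nonzero $x$ in that kernel gives $0\neq(x,\phi(x))\in\q\cap\overline{\q}$, and your entire $6$-dimensional family $\Aut(\slc2)\cong\mathrm{PSL}_2(\cn)$ contributes no complex structures at all. (That its real dimension equals the dimension $6$ of Magnin's family is a coincidence and cannot serve as a sanity check.)

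Conversely, the mixed Borel-type subalgebras you planned to rule out on transversality grounds are precisely the ones that occur. Take \eqref{matrix JV} with $\xi=0$, $\eta=1$, $(\lambda_j,\mu_j,\nu_j)=(1,0,0)$: then $J(A)=-R$, $J(B)=-C$, $J(S)=-T$, and the $(+i)$-eigenspace is
\[
\q=\cn(B+iC)\oplus\cn(S+iT)\oplus\cn(A+iR),
\]
which does satisfy $\q\cap\overline{\q}=0$ but is not a graph: $\ker(p_1|_\q)=\cn(S+iT)\neq0$, and $p_1(\q)=\cn A+\cn(B+iC)$ is a Borel subalgebra of the first factor, not all of $\slc2$. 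This is Samelson's construction, and a correct Goursat analysis shows it is the only possibility: transversality kills the cases $\dimension_\cn p_j(\q)\in\{0,1,3\}$ (the last by the argument above), forcing both projections to be Borels $\b_1,\b_2$ and forcing $\ker(p_2|_\q)=\n_1\oplus 0$, $\ker(p_1|_\q)=0\oplus\n_2$ to be their nilradicals, so that $\q=\a\oplus\n_1\oplus\n_2$ with $\a$ a line in the sum of the Cartans $(\b_1\cap\sigma(\b_1))\oplus(\b_2\cap\sigma(\b_2))$ satisfying $\a\cap\sigma(\a)=0$. The parameters then match Magnin's matrix: each Borel $\b_j$ is a point of the flag manifold $\mathbb{CP}^1\cong\sn^2$, giving $(\lambda_j,\mu_j,\nu_j)$, while the line $\a$ is recorded by $\tau=\xi+i\eta$ with $\eta\neq0$ --- this, not invertibility of $\phi-\sigma\phi\sigma$, is where the factor $\rn\times\rn^*$ comes from, exactly as displayed in the paper's reformulation \eqref{simpler J} with the rotations $M_j\in\SO3$ moving the standard Borel. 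Your closing observations (the block decomposition $\xi\,v_jv_j^{\mathsf T}-[v_j]_\times$ of the diagonal blocks, and the closedness argument via the blow-up of $(\xi^2+1)/\eta$ as $\eta\to0$) are correct and would still be useful in that final matching step, but as written the core of your argument classifies an empty family and discards the actual one.
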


Magnin notices that the matrix $J_\mathcal{V}$ in Proposition \ref{prop 2.2} can be written in a different way. Recall the Hopf fibration $p: \mathbb{S}^3 \to \mathbb{S}^2$ defined by
\[
p(x, y, z, w) = \big(2(w y - x z), 2(w x + y z), 2x^2 + 2y^2 - 1\big).
\]
Hence, for $(\lambda_j, \mu_j, \nu_j) \in \mathbb{S}^2$, $j = 1, 2$, there exists $(x_j, y_j, z_j, w_j) \in \mathbb{S}^3$, $j = 1, 2$, such that
\begin{align*}
    \lambda_j &= 2(w_j y_j - x_j z_j),\\
    \mu_j &= 2(w_j x_j + y_j z_j),\\
    \nu_j &= 2x_j^2 + 2y_j^2 - 1.
\end{align*}
There is a double cover of $\SO{3}$ by $\SU{2} \cong \mathbb{S}^3$ that maps $q = (x_j, y_j, z_j, w_j) \in \mathbb{S}^3$ to
\[M_j = 
\begin{pmatrix}
   x_j^2 - y_j^2 - z_j^2 + w_j^2 & -2(x_jy_j + z_jw_j) & 2(-x_jz_j + w_jy_j)\\
   2(-w_jz_j + x_jy_j)  & x_j^2 - y_j^2 + z_j^2 - w_j^2 & -2(w_jx_j + y_jz_j)\\
   2(w_jy_j + x_jz_j) & 2(w_jx_j - y_jz_j) & x_j^2 + y_j^2 - z_j^2 - w_j^2
\end{pmatrix} \in \SO{3},
\]
$j = 1, 2$. By a direct calculation, we see that the matrix $J_\mathcal{V}$ in Proposition \ref{prop 2.2} can be written as
\begin{equation}
    J_\mathcal{V} = \Phi \Psi(\xi, \eta) \Phi^{-1}\label{simpler J}
\end{equation}
where
\[
\Phi = \left(\begin{array}{@{}cc@{}}
    M_{1} & 0 \\\
    0 & M_{2}
  \end{array}\right)
\]
and
\[
\Psi(\xi, \eta) = 
  \left(\begin{array}{@{}cccccc@{}}
    0 & 1 & 0 & 0 & 0 & 0 \\
    -1 & 0 & 0 & 0 & 0 & 0 \\
    0 & 0 & \xi & 0 & 0 & \eta \\
    0 & 0 & 0 & 0 & 1 & 0 \\
    0 & 0 & 0 & -1 & 0 & 0 \\
    0 & 0 & -\frac{1+\xi^2}{\eta} & 0 & 0 & -\xi
  \end{array}\right).
\]
Conversely, we can verify that any matrix of the form \eqref{simpler J} can be written as \eqref{matrix JV} by reversing the process above.
\section{Hermitian Lie Groups}

Let $G^8$ be a simply connected 8-dimensional Lie group containing $K = \SU{2} \times \SU{2}$ as a Lie subgroup. Denote the Lie algebras of $G^8$ and $K$ by $\g$ and $\k$, respectively. By Theorem \ref{Lie 1} and \ref{Lie 2}, the Lie group structure of $G^8$ is totally determined by the Lie algebra $\g$ containing $\k = \su{2} \oplus \su{2}$ as a Lie subalgebra upto isomorphism. The standard structure equations of $\k$ are given by
\begin{align*}
    [A, B] = 2C,\;\;   
    [B, C] &= 2A,\;\;  
    [C, A] = 2B,\\
    [R, S] = 2T,\;\;\;  
    [S, T] &= 2R,\;\; 
    [T, R] = 2S,
\end{align*}
where $\k$ is equipped with the standard metric induced by its Killing form and $\{A, B, C, R, S, T\}$ is the standard orthonormal basis for $\k$ i.e. each copy of $\su{2}$ has the orthonormal basis $\{A, B, C\}$, $\{X, Y, Z\}$, respectively. We can extend $\{A, B, C, R, $ $S, T\}$ to a basis for $\g$ by adding vectors $X$ and $Y$. By declaring $$\mathcal{B} = \{A, B, C, R, S, T, X, Y\}$$ to be an orthonormal basis for $\g$, we can define a left-invariant metric on $G^8$. The orthogonal complement of $\k$ in $\g$, which is denoted by $\m$, has the orthonormal basis $\{X, Y\}$. The Lie algebra $\g$ is well-defined if and only if the Lie brackets relations among basis vectors is antisymmetric and satisfy the Jacobi identity introduced in Chapter 1. The problem of classifying such Lie algebras has been completely solved by the following result, see Proposition 4.1 in \cite{Tur}.

\begin{proposition}
Let $G$ be an 8-dimensional Riemannian Lie group containing $K = \SU{2} \times \SU{2}$ as a Lie subgroup, where $K$ is endowed with the standard Riemannian metric induced by its Killing form. Then the Lie bracket relations for $\g$ take the following form
\begin{align}
    [A, X] = -b_{11}B -c_{11}C,\;\;  &[A, Y] = -b_{21}B -c_{21}C, \nonumber\\   
    [B, X] = b_{11}A -c_{12}C,\;\;   &[B, Y] = b_{21}A -c_{22}C, \nonumber\\    
    [C, X] = c_{11}A +c_{12}B,\;\;   &[C, Y] = c_{21}A +c_{22}B, \nonumber\\    
    [R, X] = -s_{14}S -t_{14}T,\;\;  &[R, Y] = -s_{24}S -t_{24}T, \label{eq1}\\    
    [S, X] = s_{14}R -t_{15}T,\;\;  &[S, Y] = s_{24}R -t_{25}T, \nonumber\\     
    [T, X] = t_{14}R +t_{15}S,\;\;  &[T, Y] = t_{24}R +t_{25}S, \nonumber\\
    [X, Y] = \rho X + \theta_1 A + \theta_2 B &+ \theta_3 C + \theta_4 R + \theta_5 S + \theta_6 T, \nonumber
\end{align}
where 
\[
\left(\begin{array}{@{}c@{}}
\theta_1\\
\theta_2\\
\theta_3\\
\theta_4\\
\theta_5\\
\theta_6
  \end{array}\right)
=\frac{1}{2}
\left(\begin{array}{@{}c@{}}
-\rho c_{12} + b_{11}c_{21} - b_{21}c_{11}\\
\rho c_{11} + b_{11}c_{22} - b_{21}c_{12}\\
-\rho b_{11} + c_{11}c_{22} - c_{12}c_{21}\\
-\rho t_{15} + s_{14}t_{24} - s_{24}t_{14}\\
\rho t_{14} + s_{14}t_{25} - s_{24}t_{15}\\
-\rho s_{14} + t_{14}t_{25} - t_{15}t_{24}
  \end{array}\right)
\]
and the 13 coefficients $b_{jk}, c_{jk}, s_{jk}, t_{jk}, \rho$ are arbitrary real numbers.\label{Turner equations}
\end{proposition}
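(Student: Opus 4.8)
The plan is to pin down the unknown brackets $[\k,\m]$ and $[X,Y]$ from just two inputs: that $\k=\g_1\oplus\g_2$, with $\g_1=\operatorname{span}\{A,B,C\}$ and $\g_2=\operatorname{span}\{R,S,T\}$ the two simple ideals, is a subalgebra with the stated structure (so in particular $[\g_1,\g_2]=0$), and the Jacobi identity. Writing $\g=\g_1\oplus\g_2\oplus\m$, I first regard $\g$ as a module over $\g_1\cong\su2$ via $\ad$. The quotient $\g/\k$ is a real $\su2$-module of dimension $2$; since the only real representations of $\su2$ of dimension at most two are trivial, this module is trivial, whence $[\g_1,\m]\subseteq\k$. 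Writing $[Z,X]=u(Z)+w(Z)$ with $u(Z)\in\g_1$, $w(Z)\in\g_2$ for $Z\in\g_1$, the Jacobi identity for $(U,V,X)$ with $U,V\in\g_1$, together with $[\g_1,\g_2]=0$, forces $w([U,V])=0$; as $[\g_1,\g_1]=\g_1$ this gives $w\equiv0$, i.e.\ $[\g_1,X]\subseteq\g_1$, and likewise for $Y$ and, by symmetry, for $\g_2$. Thus each of $\ad_X,\ad_Y$ preserves $\g_1$ and $\g_2$ separately and has no $\m$-component.

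Next I would show these restrictions are skew. For $U,V\in\g_1$ the same Jacobi identity, now with $w\equiv0$, reads $[[U,V],X]=[U,[V,X]]+[[U,X],V]$, i.e.\ the map $Z\mapsto[Z,X]$ is a derivation of $\g_1$. Since $\g_1\cong\su2$ is semisimple, every derivation is inner, so this map equals some $\ad_W$; written in the Killing-orthonormal basis $\{A,B,C\}$ it is therefore skew-symmetric, which is exactly the antisymmetric pattern displayed for $[A,X],[B,X],[C,X]$. Applying the same argument to $Y$ and to $\g_2$ produces the twelve parameters $b_{jk},c_{jk}$ on $\g_1$ and $s_{jk},t_{jk}$ on $\g_2$.

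It remains to treat $[X,Y]$. I would write $[X,Y]=\rho X+\sigma Y+k_1+k_2$ with $k_i\in\g_i$. Because $\{X,Y\}$ may be any orthonormal basis of $\m$, rotating it rotates the horizontal part $\rho X+\sigma Y$ as a vector in $\m$, so after such a rotation I may assume $\sigma=0$; this single gauge freedom is what cuts the naive $14$ parameters down to $13$. The components of $k_1$ are then forced by the Jacobi identities for $(A,X,Y),(B,X,Y),(C,X,Y)$, which collapse to the endomorphism identity $\ad_{k_1}|_{\g_1}=[\ad_X,\ad_Y]|_{\g_1}-\rho\,\ad_X|_{\g_1}$ in $\so3\cong\g_1$; its right-hand side is skew, so $k_1$ is uniquely determined, and the analogous triples with $R,S,T$ fix $k_2$. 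Evaluating the commutator of the two skew operators on $\g_1$ (its components are, under the identification $\so3\cong\rn^3$, the cross product of the axis vectors $(b_{11},c_{11},c_{12})$ and $(b_{21},c_{21},c_{22})$) and adding the $\rho$-linear term reproduces the stated expressions for $\theta_1,\dots,\theta_6$, and similarly for $\theta_4,\theta_5,\theta_6$.

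For the converse I would check that with the restrictions of $\ad_X,\ad_Y$ skew and $k_1,k_2$ so defined, every Jacobi triple holds: the same-copy ones $(U,V,X)$ because skew $\Leftrightarrow$ inner derivation; the mixed-copy ones such as $(A,R,X)$ automatically from $[\g_1,\g_2]=0$; and the triples $(\,\cdot\,,X,Y)$ by construction of $k_1,k_2$. Hence each choice of the thirteen numbers yields a genuine Lie algebra, completing both directions. The main obstacle is the bookkeeping in the last two paragraphs: justifying the $\sigma=0$ normalization cleanly against the parameter count, and matching signs and indices in the commutator computation so as to land on the precise formulas for the $\theta_i$.
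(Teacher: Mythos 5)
Your proposal is correct, but there is nothing in the thesis to compare it against: the proposition is stated without proof, quoted as Proposition 4.1 of \cite{Tur}, so your argument supplies a proof that the paper delegates to the literature. Your structure is the natural one and all three pillars hold up. First, real irreducible representations of $\su2$ have dimensions $1,3,4,5,\dots$, so the $2$-dimensional module $\g/\k$ is indeed trivial and $[\k,\g]\subseteq\k$. Second, your Jacobi argument using $[\g_1,\g_2]=0$ and perfectness of $\g_1$ correctly gives $[\g_1,\m]\subseteq\g_1$; then $Z\mapsto[Z,X]$ is a derivation of $\g_1\cong\su2$, hence inner, hence skew-symmetric with respect to the rescaled Killing metric, which is exactly the displayed pattern with parameters $b_{jk},c_{jk}$ (and $s_{jk},t_{jk}$ on the second copy). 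Third, the bookkeeping you flagged as the main obstacle does work out: writing $D_X(Z)=[Z,X]=\widehat{v}_X(Z)$ one finds $v_X=(-c_{12},c_{11},-b_{11})$ (your axis vector $(b_{11},c_{11},c_{12})$ is a signed permutation of this, i.e.\ a different but harmless identification $\so3\cong\rn^3$), and since $\ad_{\theta_1A+\theta_2B+\theta_3C}=2\,\widehat{(\theta_1,\theta_2,\theta_3)}$ (the factor $2$ coming from $[A,B]=2C$, and producing the $\tfrac12$ in the stated table), the identity $[\widehat{u},\widehat{v}]=\widehat{u\times v}$ turns your relation $\ad_{k_1}|_{\g_1}=[\ad_X,\ad_Y]|_{\g_1}-\rho\,\ad_X|_{\g_1}$ into $2(\theta_1,\theta_2,\theta_3)=v_X\times v_Y+\rho\,v_X$, which is precisely the claimed expression; the second copy is identical. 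Your gauge-fixing $\sigma=0$ is also the right reading of the statement: $[X,Y]$ is unchanged under rotations of the orthonormal basis $\{X,Y\}$ of $\m$, so that basis may be chosen with the $\m$-component of $[X,Y]$ along $X$, and the thesis's setup (extend $\{A,\dots,T\}$ by an arbitrary orthonormal pair) permits exactly this choice — this is indeed where $14$ parameters drop to $13$. Finally, your converse case-check is exhaustive: triples inside $\k$, mixed-copy triples, same-copy triples via skew $\Leftrightarrow$ inner derivation (valid since $\ad$ is injective on $\su2$ and $\dimension\so3=3$), triples $(\,\cdot\,,X,Y)$ by construction of $k_1,k_2$, and no triples with three $\m$-entries exist since $\dimension\m=2$; so every choice of the thirteen constants yields a genuine Lie algebra.
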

Let $\mathcal{V}, \mathcal{H}$ be the left-invariant distributions generated by $\k, \m$, respectively. Then $TM$ has an orthogonal decomposition $TM = \mathcal{V} \oplus \mathcal{H}$. Let $J$ be a left-invariant almost Hermitian structure on $G^8$ such that $J(\mathcal{V}) = \mathcal{V}$ and $J(\mathcal{H}) = \mathcal{H}$. Then we can assume that $J(X) = Y$ and $J(Y) = -X$, otherwise we interchange $X, Y$ and the resulting equations only differ by a sign. Therefore, we can write $J$ as a $8 \times 8$ block matrix
$$J = \begin{pmatrix}
J_{\mathcal{V}} & 0\\
   0 & J_{\mathcal{H}}
\end{pmatrix},\ \ \text{where}\ \ J_{\mathcal{H}}=
\begin{pmatrix}
0 & -1\\
1 &  0
\end{pmatrix}\in\rn^{2\times 2}
$$
and $J_{\mathcal{V}}\in \rn^{6 \times 6}$ satisfies $J_{\mathcal{V}}^2=-I_6$. So $J_{\mathcal{V}}$ is a complex structure on $\k$. Since $J(\mathcal{V}) \subseteq \mathcal{V}$ and $[\mathcal{V}, \mathcal{V}] \subseteq \mathcal{V}$, we can restrict $N_J$ to the Nijenhuis tensor $N_{J_{\mathcal{V}}}$ on $\mathcal{V}$. Hence, $N_J = 0$ implies that $N_{J_{\mathcal{V}}} = 0$ i.e. $J_{\mathcal{V}}$ must be an integrable left-invariant almost complex structure on $\su{2} \oplus \su{2}$, which is given by \eqref{matrix JV}.\smallskip

The structure $J$ preserves the metric i.e. $J \in \SO{8}$ if and only if $J_{\mathcal{V}} \in \SO{6}$, which is equivalent to $\Psi(\xi, \eta) \in \SO{6}$ by \eqref{simpler J}. So it suffices to solve the following system of equations:
\begin{align*}
    \xi^2 + \eta^2 &= 1,\\
    (-\frac{1+\xi^2}{\eta})^2 + (-\xi)^2 &= 1,\\
    -\frac{1+\xi^2}{\eta}\xi - \xi \eta &= 0. 
\end{align*}
The only solution to the system is $\xi = 0, \eta = \pm 1$.\smallskip

When $\eta = 1$ we have
\begin{equation}
\Psi(\xi, \eta) = 
  \left(\begin{array}{@{}cccccc@{}}
    0 & 1 & 0 & 0 & 0 & 0 \\
    -1 & 0 & 0 & 0 & 0 & 0 \\
    0 & 0 & 0 & 0 & 0 & 1 \\
    0 & 0 & 0 & 0 & 1 & 0 \\
    0 & 0 & 0 & -1 & 0 & 0 \\
    0 & 0 & -1 & 0 & 0 & 0
  \end{array}\right).\nonumber
\end{equation}
And when $\eta = -1$ we have
\begin{equation}
\Psi(\xi, \eta) = 
  \left(\begin{array}{@{}cccccc@{}}
    0 & 1 & 0 & 0 & 0 & 0 \\
    -1 & 0 & 0 & 0 & 0 & 0 \\
    0 & 0 & 0 & 0 & 0 & -1 \\
    0 & 0 & 0 & 0 & 1 & 0 \\
    0 & 0 & 0 & -1 & 0 & 0 \\
    0 & 0 & 1 & 0 & 0 & 0
  \end{array}\right).\nonumber
\end{equation}

When $\eta = 1$, consider the action of the almost complex structure $J$ on $\k = \su2 \oplus \su2$ geometrically. Firstly, $J$ rotates the two copies of $\su2$ by the corresponding matrix $M_1^T, M_2^T \in \SO{3}$, respectively. Then it gives the three mutually orthogonal subspaces spanned by $\{A, B\}$, $\{C, T\}$, $\{R, S\}$ a structure of complex plane respectively, i.e. the two copies of $\su2$ are interconnected by the basis vector $C$ and $T$. Finally, the two copies are rotated back according to $M_1, M_2 \in \SO{3}$ respectively.\smallskip

In this thesis, we solve the integrability conditions of $J$ for the case $\eta = 1$. The other cases when $\eta = -1$ can be solved in a similar way. When $\xi = 0, \eta = 1$, the matrix $J$ becomes
\begin{equation}
  \left(\begin{array}{@{}cccccccc@{}}
    0 & \nu_1 & \mu_1 & \lambda_1\lambda_2 & -\lambda_1\mu_2 & \lambda_1\nu_2&0&0 \\
   -\nu_1 & 0 & \lambda_1 & -\mu_1\lambda_2 & \mu_1\mu_2 & -\mu_1\nu_2 &0&0\\
    -\mu_1 & -\lambda_1 & 0 & \nu_1\lambda_2 & -\nu_1\mu_2 & \nu_1\nu_2 &0&0\\
    -\lambda_1\lambda_2 & \mu_1\lambda_2 & -\nu_1\lambda_2 & 0 & \nu_2 & \mu_2 &0&0\\
  \lambda_1\mu_2 & -\mu_1\mu_2 & \nu_1\mu_2 & - \nu_2 & 0 & \lambda_2 &0&0\\
    -\lambda_1\nu_2 & \mu_1\nu_2 & -\nu_1\nu_2 & -\mu_2 & -\lambda_2 & 0&0&0\\
    0&0&0&0&0&0&0&-1\\
     0&0&0&0&0&0&1&0
  \end{array}\right).\label{matrixJ}
\end{equation}
For convenience, the basis $\{A, B, C, R, S, T, X, Y\}$ is denoted by $\{e_1, \dots e_8\}$. We use the two notations interchangeably.\smallskip

Now we can state the main result of this thesis.
\begin{theorem}
Let $G^8$ be a 8-dimensional Riemannian Lie group containing $K = \SU{2} \times \SU{2}$ as a Lie subgroup, where $K$ is endowed with the standard Riemannian metric induced by its Killing form. The Lie bracket relations are given by Proposition \ref{Turner equations}. Let $J$ be the left-invariant almost Hermitian structure on $G^8$ defined by $\eqref{matrixJ}$, where $\lambda_j^2 + \mu_j^2 + \nu_j^2 = 1$, $j = 1, 2$. Without loss of generality, we assume that $\lambda_1, \lambda_2 \neq 0$. Then $J$ is integrable if and only if the coefficients of $\g$ satisfy the following relations
\begin{align}
    c_{11} &= \frac{\lambda_1^2+\mu_1^2}{\lambda_1}b_{21}+\frac{\mu_1}{\lambda_1}c_{12}-\frac{\mu_1\nu_1}{\lambda_1}c_{21}-\nu_1c_{22},\nonumber\\
    b_{11} &= \frac{\mu_1\nu_1}{\lambda_1}b_{21} + \frac{\nu_1}{\lambda_1}c_{12} -\frac{\lambda_1^2+\nu_1^2}{\lambda_1}c_{21} + \mu_1c_{22},\nonumber\\
    t_{14} &= \frac{\lambda_2^2+\mu_2^2}{\lambda_2}s_{24}+\frac{\mu_2}{\lambda_2}t_{15}-\frac{\mu_2\nu_2}{\lambda_2}t_{24}-\nu_2t_{25},\nonumber\\
    s_{14} &= \frac{\mu_2\nu_2}{\lambda_2}s_{24} + \frac{\nu_2}{\lambda_2}t_{15} -\frac{\lambda_2^2+\nu_2^2}{\lambda_2}t_{24} + \mu_2t_{25},\nonumber
\end{align}
where $b_{21}, c_{12}, c_{21}, c_{22}, s_{24}, t_{15}, t_{24}, t_{25}$ are arbitrary real numbers.
\label{integrability theorem} 
\end{theorem}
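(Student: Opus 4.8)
The plan is to apply the Newlander--Nirenberg theorem (Theorem \ref{newlander nirenberg}) together with the fact, established earlier for left-invariant almost complex structures, that $N_J$ vanishes on all of $G^8$ if and only if it vanishes on $\g = T_eG$. Thus I would evaluate $N_J(e_i,e_j)$ on the orthonormal basis $\{e_1,\dots,e_8\}=\{A,B,C,R,S,T,X,Y\}$ using the defining formula \eqref{nijenhuis}, the bracket relations \eqref{eq1}, and the explicit matrix \eqref{matrixJ} for $J$. Since $N_J$ is bilinear and skew, only the pairs $(e_i,e_j)$ with $i<j$ matter, and they fall into three types: both vectors vertical (in $\k$), the single horizontal pair $(X,Y)$, and the mixed pairs $(V,X),(V,Y)$ with $V\in\{A,B,C,R,S,T\}$.

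First I would clear away the automatic cases. Because $J$ preserves $\mathcal{V}$ and $\mathcal{V}$ is a subalgebra, the restriction of $N_J$ to $\mathcal{V}\times\mathcal{V}$ is the Nijenhuis tensor $N_{J_\mathcal{V}}$ computed in $\k$, and this vanishes since $J_\mathcal{V}$ was chosen from Magnin's integrable family (Proposition \ref{prop 2.2}). For the horizontal pair, substituting $JX=Y$ and $JY=-X$ into \eqref{nijenhuis} and using skew-symmetry of the bracket collapses $N_J(X,Y)$ to $[X,Y]-[X,Y]=0$. Hence every genuine condition comes from the mixed pairs.

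Next I would halve the mixed pairs using the identity $N_J(JU,W)=-J\,N_J(U,W)$, which follows directly from \eqref{nijenhuis}; with skew-symmetry it gives $N_J(V,Y)=N_J(V,JX)=-J\,N_J(V,X)$, so since $J$ is invertible the $(V,Y)$ conditions are equivalent to the $(V,X)$ conditions. The complete integrability condition is therefore $N_J(V,X)=0$ for the six vertical basis vectors $V$. For each I would expand $N_J(V,X)=[V,X]+J[JV,X]+J[V,Y]-[JV,Y]$, reading $JV$ off the columns of \eqref{matrixJ} and each bracket off \eqref{eq1}. Since $JV\in\k$ and every bracket of a $\k$-vector with $X$ or $Y$ lands in $\k$, each $N_J(V,X)$ is a vector in $\k$; equating its six components to zero yields a linear system in the thirteen structure coefficients.

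The resulting system is coupled and heavily over-determined: because $JV$ generally has nonzero components in both $\su{2}$ summands, $N_J(V,X)$ contains brackets drawn from both factors, so the coefficients $s_{14},t_{14},t_{15},\dots$ of the second factor appear in the equations produced by $V\in\{A,B,C\}$ and vice versa. The crux is to show that, upon imposing $\lambda_1,\lambda_2\neq 0$ and the sphere relations $\lambda_j^2+\mu_j^2+\nu_j^2=1$, this system decouples and collapses to exactly the four displayed relations, solving for $c_{11},b_{11}$ out of the $\{A,B,C\}$ block and for $t_{14},s_{14}$ out of the $\{R,S,T\}$ block while leaving the remaining eight coefficients free; the perfectly parallel roles of the two $\su{2}$ factors in \eqref{matrixJ} and \eqref{eq1} account for the mirror symmetry of the four relations. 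The hard part is entirely this disentangling: each $N_J(V,X)$ must be pushed through $J$ a second time and fully expanded, and the numerous apparent cross-factor contributions must be tracked and shown to cancel before the four independent relations emerge. I anticipate no conceptual obstacle beyond this computation.
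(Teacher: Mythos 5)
Your proposal is correct, and its skeleton matches the paper's: reduce to $\g$ by left-invariance, dispose of the vertical--vertical pairs via Magnin's integrability of $J_\mathcal{V}$, check $N_J(X,Y)=0$ directly, observe that $N_J(V,X)\in\k$ because $J\mathcal{V}\subseteq\mathcal{V}$ and all brackets of $\k$ with $X,Y$ land in $\k$, and then solve a linear system in the structure constants. Where you genuinely diverge is in how the mixed pairs are halved. You invoke the tensor identity $N_J(JU,W)=-J\,N_J(U,W)$ (which does hold, by a two-line expansion of \eqref{nijenhuis}) to eliminate $Y$ entirely, leaving the six $\k$-valued equations $N_J(V,X)=0$, i.e.\ $36$ scalar equations. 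The paper instead keeps both $X$ and $Y$ but proves the component-level antisymmetry $N_J(e_j,e_\alpha)_k+N_J(e_k,e_\alpha)_j=0$ in \eqref{lemma0}, cutting $72$ equations to $30$, and then establishes the nine linear relations \eqref{eq3.6}, which show that all cross-factor equations ($j\in\{1,2,3\}$, $k\in\{4,5,6\}$) are consequences of the same-factor ones; what survives is a pair of decoupled $6\times 6$ systems $\mathcal{S}_1$, $\mathcal{S}_2$, each of rank $2$ by explicit row reduction using $\lambda_1,\lambda_2\neq 0$. Your identity is the cleaner and more conceptual reduction; the paper's buys a much smaller, fully decoupled end-game that is tractable by hand. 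Two small calibrations to your plan: first, the cross-factor contributions you expect to ``cancel'' do not vanish identically --- they are genuine nonzero equations mixing $b,c$ with $s,t$, and the correct statement (which the paper's \eqref{eq3.6} supplies) is that they are linear combinations of the same-factor equations, hence redundant on the solution set; your brute-force solve would still reach the right answer, but this redundancy is what you would in effect be re-deriving. Second, the system is linear in the twelve coefficients $b_{jk},c_{jk},s_{jk},t_{jk}$ only; $\rho$ never enters the mixed-pair equations, since $[X,Y]$ appears in no $N_J(V,X)$ --- this is exactly why $\rho$ remains free and the solution family is $9$-dimensional rather than $8$-dimensional.
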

\begin{remark}
For each case, the dimension of the solutions to the system is 8. Notice that in the Lie bracket relations of $\g$, the coefficient $\rho$ is an arbitrary real number. Therefore, for any left-invariant almost Hermitian structure $J$ on $G^8$ given by \eqref{matrix JV}, there exists exactly a 9-dimensional family of coefficients for the Lie brackets relations of $\g$ such that $(G^8, g, J)$ is a Hermitian Lie group. The space of solutions to the problem therefore forms a vector bundle of rank 9 over $\mathbb{S}^2 \times \mathbb{S}^2$. 
\end{remark}
\begin{proof}
We denote $\langle N_J(e_j, e_\alpha), e_k \rangle$ by $N_J(e_j, e_\alpha)_k$ and the dimension of $G^8$ by $2n$. Since $J_{\mathcal{V}}$ is integrable and $N_J$ restricts to $N_{J_{\mathcal{V}}}$, we have
\[
N_J(e_j, e_k) = 0,
\]
where $j, k \in \{1,\dots, 6\}$. By the formula of $J$, we have
\begin{align*}
    N_J(X, Y) &= [X, Y] + J[JX, Y] + J[X, JY] - [JX, JY]\\
              &= [X, Y] + J[Y, Y] + J[X, -X] - [Y, -X]\\
              &=0.
\end{align*}
Since the Nijenhuis tensor $N_J$ is antisymmetric, for $\alpha, \beta \in \{7, 8\}$, we have
\[
N_J(e_\alpha, e_\beta) = 0.
\]
Now we consider the case when $j \in \{1, \dots, 6\}$ and $\alpha \in \{7, 8\}$. Since $J(\mathcal{V})\subseteq\mathcal{V}$ and $\mathcal{H}([E, F]) = 0$ for $E \in \mathcal{V}, F \in \mathcal{H}$, we have
\begin{align*}
N_J(e_j, e_\alpha)_\beta &= \big\langle [e_j, e_\alpha] + J[Je_j, e_\alpha] + J[e_j, Je_\alpha] - [Je_j, Je_\alpha], e_\beta\big\rangle\\
&= 0,
\end{align*}
where $\beta \in \{7, 8\}$. So it suffices to check
\begin{equation}
N_J(e_j, e_\alpha)_k = 0,\nonumber
\end{equation}
where $j, k \in \{1,\dots, 6\}$ and $\alpha \in \{7, 8\}$.\smallskip

There are $6 \times 6 \times 2 = 72$ equations to solve. To simplify the system of equations, first of all we prove the following equality
\begin{equation}
N_J(e_j, e_\alpha)_k + N_J(e_k, e_\alpha)_j = 0\label{lemma0}
\end{equation}
for any $j, k \in \{1, 2, 3, 4, 5, 6\}$ and $\alpha \in \{7, 8\}$. By definition, we have
\[
N_J(e_j, e_\alpha)_k = \big\langle [e_j, e_\alpha] + J[Je_j, e_\alpha] + J[e_j, Je_\alpha] - [Je_j, Je_\alpha], e_k\big\rangle,
\]
\[
N_J(e_k, e_\alpha)_j = \big\langle [e_k, e_\alpha] + J[Je_k, e_\alpha] + J[e_k, Je_\alpha] - [Je_k, Je_\alpha], e_j\big\rangle.
\]
From the Lie bracket relations \eqref{eq1} for $G^8$ we can directly see that
\begin{equation}
\big\langle [e_j, e_\alpha], e_k \big\rangle = -\big\langle [e_k, e_\alpha], e_j \big\rangle.\label{lemma1}
\end{equation}
Next we show that the sum of the third item of $N_J(e_j, e_\alpha)_k$ and the fourth item of $N_J(e_k, e_\alpha)_j$ is zero. By definition, we have
\begin{align}
    &\big\langle J[e_j, Je_\alpha], e_k \big\rangle - \big\langle [Je_k, Je_\alpha], e_j \big\rangle\nonumber\\= &-\Big\{\big\langle [e_j, Je_\alpha], Je_k \big\rangle + \big\langle [Je_k, Je_\alpha], e_j \big\rangle \Big\}\nonumber\\
    = &-\Big\{ \Big\langle \sum_{l = 1}^{2n}\langle [e_j, Je_\alpha], e_l\rangle e_l, \sum_{l=1}^{2n}\langle Je_k, e_l \rangle e_l\Big\rangle\nonumber\\  &+\Big\langle [\sum_{l=1}^{2n}\langle Je_k, e_l \rangle e_l, Je_\alpha], e_j \Big\rangle \Big\}\nonumber\\
    =&-\Big\{ \sum_{l=1}^{2n}\big\langle [e_j, Je_\alpha], e_l \big\rangle \big\langle Je_k, e_l\big\rangle\nonumber\\ &+ \sum_{l=1}^{2n}\big\langle Je_k, e_l \big\rangle \big\langle [e_l, Je_\alpha], e_j \big\rangle \Big\}\nonumber\\
    = &-\sum_{l = 1}^{2n - 2}\big\langle Je_k, e_l \big\rangle \Big( \big\langle [e_j, Je_\alpha], e_l \big\rangle + \big\langle [e_l, Je_\alpha], e_j \big\rangle \Big) \nonumber\\
    &- \sum_{l = 2n - 1}^{2n}\big\langle Je_k, e_l \big\rangle \Big( \big\langle [e_j, Je_\alpha], e_l \big\rangle + \big\langle [e_l, Je_\alpha], e_j \big\rangle \Big) \nonumber\\
    = &- \sum_{l = 1}^{2n - 2}\big\langle Je_k, e_l \big\rangle \Big( \big\langle [e_j, Je_\alpha], e_l \big\rangle + \big\langle [e_l, Je_\alpha], e_j \big\rangle \Big) \nonumber\\
    =&\;0\label{lemma2},
\end{align}
where the second last equality holds since $\mathcal{H}(Je_k) = 0$ by \eqref{eq1}, and the last equality follows from $\eqref{lemma1}$. By the symmetry in $j, k$, we see that the sum of the fourth item of $N_J(e_j, e_\alpha)_k$ and the third item of $N_J(e_k, e_\alpha)_j$ is also zero.\smallskip

For the second item in $N_J(e_j, e_\alpha)_k$, we have
\begin{align*}
    \Big\langle J[Je_j, e_\alpha], e_k \Big\rangle &=-\Big\langle [Je_j, e_\alpha], Je_k\Big\rangle\\
    &=-\Big\langle \big[ \sum_{l=1}^{2n}\langle Je_j, e_l\rangle e_l, e_\alpha \big], \sum_{r=1}^{2n}\langle Je_k, e_r\rangle e_r \Big\rangle\\
    &= -\Big\langle \sum_{r=1}^{2n}\big\langle\big[ \sum_{l=1}^{2n}\langle Je_j, e_l\rangle e_l, e_\alpha \big], e_r\big\rangle e_r, \sum_{r=1}^{2n}\langle Je_k, e_r\rangle e_r \Big\rangle\\
    &= -\sum_{r=1}^{2n}\big\langle \big[ \sum_{l=1}^{2n}\langle Je_j, e_l\rangle e_l, e_\alpha \big], e_r \big\rangle \big\langle Je_k, e_r\big\rangle\\
    &= -\sum_{r=1}^{2n}\sum_{l=1}^{2n}\langle Je_j, e_l\rangle\langle [e_l, e_\alpha], e_r\rangle \langle Je_k, e_r\rangle.
\end{align*}
Switching $k$ and $j$, by the symmetry in $j, k$, we see that
\begin{align*}
    \Big\langle J[Je_k, e_\alpha], e_j \Big\rangle &= -\sum_{r=1}^{2n}\sum_{l=1}^{2n}\langle Je_k, e_l\rangle\langle [e_l, e_\alpha], e_r\rangle \langle Je_j, e_r\rangle\\
    &=-\sum_{r=1}^{2n}\sum_{l=1}^{2n}\langle Je_j, e_l\rangle\langle [e_r, e_\alpha], e_l\rangle \langle Je_k, e_r\rangle,
\end{align*}
where the last equality follows by renaming the indices $l, r$. Adding the two equalities together, by \eqref{lemma1}, we have
\begin{align}
	&\Big\langle J[Je_j, e_\alpha], e_k \Big\rangle + \Big\langle J[Je_k, e_\alpha], e_j \Big\rangle\nonumber\\ = &-\sum_{r=1}^{2n}\sum_{l=1}^{2n}\langle Je_j, e_l\rangle \langle Je_k, e_r\rangle (\langle [e_l, e_\alpha], e_r\rangle + \langle [e_r, e_\alpha], e_l\rangle)\nonumber\\
	= &-\sum_{r=1}^{2n - 2}\sum_{l = 1}^{2n - 2}\langle Je_j, e_l\rangle \langle Je_k, e_r\rangle (\langle [e_l, e_\alpha], e_r\rangle + \langle [e_r, e_\alpha], e_l\rangle)\nonumber\\
	 = &\;0\label{lemma3},
\end{align}
where the second equality holds since $\mathcal{H}(Je_j) = \mathcal{H}(Je_k) = 0$ by \eqref{eq1}.\smallskip

The proof of $\eqref{lemma0}$ is accomplished by adding \eqref{lemma1}, \eqref{lemma2} and \eqref{lemma3}. Therefore, it suffices to solve the system of $2\times{6\choose 2} = 30$ equations given by
\begin{equation}
N_J(e_j, e_\alpha)_k = 0,\nonumber
\end{equation}
where $j, k \in \{1, 2, 3, 4, 5, 6\}$, $j<k$, and $\alpha \in \{7, 8\}$. These 30 equations can be divided into 2 cases
\begin{enumerate}[label= (\roman*)]
    \item $j \in \{1, 2, 3\}$, $k \in \{4, 5, 6\}$, $\alpha \in \{7, 8\}$;
    \item $j, k \in \{1, 2, 3\}$ or $j, k \in \{4, 5, 6\}$, $\alpha \in \{7, 8\}$.
\end{enumerate}

Case (i) consists of $3 \times 3 \times 2 = 18$ equations, and case (ii) has $2 \times 2 \times {3 \choose 2} = 12$ equations. By a standard calculation, we see that the two cases satisfy the following linear relations
\begin{align}
    N_J(e_1, e_\alpha)_4 &= \lambda_2 N_J(e_2, e_\alpha)_3 + \lambda_1 N_J(e_5, e_\alpha)_6, \nonumber\\
    N_J(e_1, e_\alpha)_5 &= -\mu_2 N_J(e_2, e_\alpha)_3 - \lambda_1 N_J(e_4, e_\alpha)_6,\nonumber\\
    N_J(e_1, e_\alpha)_6 &= \nu_2 N_J(e_2, e_\alpha)_3 + \lambda_1 N_J(e_4, e_\alpha)_5,\nonumber\\
    N_J(e_2, e_\alpha)_4 &= -\lambda_2 N_J(e_1, e_\alpha)_3 - \mu_1 N_J(e_5, e_\alpha)_6,\nonumber\\
    N_J(e_2, e_\alpha)_5 &= \mu_2 N_J(e_1, e_\alpha)_3 + \mu_1 N_J(e_4, e_\alpha)_6,\label{eq3.6}\\
    N_J(e_2, e_\alpha)_6 &= -\nu_2 N_J(e_1, e_\alpha)_3 - \mu_1 N_J(e_4, e_\alpha)_5,\nonumber\\
    N_J(e_3, e_\alpha)_4 &= \lambda_2 N_J(e_1, e_\alpha)_2 + \nu_1 N_J(e_5, e_\alpha)_6,\nonumber\\
    N_J(e_3, e_\alpha)_5 &= -\mu_2 N_J(e_1, e_\alpha)_2 - \nu_1 N_J(e_4, e_\alpha)_6,\nonumber\\
    N_J(e_3, e_\alpha)_6 &= \nu_2 N_J(e_1, e_\alpha)_2 + \nu_1 N_J(e_4, e_\alpha)_5,\nonumber
\end{align}
where $\alpha\in\{7, 8\}$. We check the first equality in \eqref{eq3.6} where $\alpha = 7$. By definition, we have
\[
N_J(e_1, e_7)_4  = \big\langle [e_1, e_7] + J[Je_1, e_7] + J[e_1, Je_7] - [Je_1, Je_7], e_4 \big\rangle.
\]
By a direct calculation, we see that
\begin{align*}
    \Big\langle [e_1, e_7], e_4 \Big\rangle = &0,\\
\Big\langle J[Je_1, e_7], e_4 \Big\rangle = &\lambda_2 (-c_{12} + \lambda_1^2c_{12}+\lambda_1\mu_1c_{11}+\lambda_1\nu_1b_{11})\\ &+ \lambda_1 (-t_{15} + \lambda_2^2t_{15} + \lambda_2\mu_2t_{14}+\lambda_2\nu_2s_{14})\\
= &\lambda_2\Big\langle [e_2, e_7] + J[Je_2, e_7], e_3 \Big\rangle + \lambda_1 \Big\langle [e_5, e_7] + J[Je_5, e_7], e_6 \Big\rangle,\\
\Big\langle J[e_1, Je_7], e_4 \Big\rangle
= &\lambda_2(-\mu_1b_{21}+\nu_1c_{21})\\=&\lambda_2\Big\langle J[e_2, Je_7] - [Je_2, Je_7], e_3 \Big\rangle,\\
-\Big\langle [Je_1, Je_7], e_4 \Big\rangle
= &\lambda_1(-\mu_2s_{24}+\nu_2t_{24})\\= &\lambda_1\Big\langle J[e_5, Je_7] - [Je_5, Je_7], e_6 \Big\rangle.
\end{align*}
Adding these equalities together gives us
\[
N_J(e_1, e_7)_4 = \lambda_2 N_J(e_2, e_7)_3 + \lambda_1 N_J(e_5, e_7)_6.
\]
The other equalities follow by a similar calculation. From \eqref{eq3.6} we see that the solutions to case (ii) also solve case (i). Hence, it suffices to consider case (ii).\smallskip

We notice that the system of equations of case (ii) can be written as
\[
  \left(\begin{array}{@{}cc@{}}
    P_{11} & 0 \\\
    0 & P_{22}
  \end{array}\right)\left(\begin{array}{@{}c@{}}
    \textbf{u} \\
    \textbf{v}
  \end{array}\right)=0,
\]
where $P_{11}, P_{22} \in \rn^{6 \times 6}$, $\textbf{u} = (b_{11}, b_{21}, c_{11}, c_{12}, c_{21}, c_{22})^T$,$\textbf{v} = (s_{14}, s_{24}, t_{14}, t_{15}, t_{24}, t_{25})^T$. We denote the system of equations $P_{11}\textbf{u} = 0$ by $\mathcal{S}_1$ and the system $P_{22}\textbf{v} = 0$ by $\mathcal{S}_2$.\smallskip

The system $\mathcal{S}_1$ includes the cases when $j, k\in\{1, 2, 3\}, j<k$ and $\alpha\in\{7, 8\}$.
\begin{equation}
  \left(\begin{array}{@{}cccccc@{}}
    0 & \nu_1^2-1 & \lambda_1 & -\mu_1 & \mu_1\nu_1 & \lambda_1\nu_1 \\
    -\nu_1^2+1 & 0 & -\mu_1\nu_1 & -\lambda_1\nu_1 & \lambda_1 & -\mu_1 \\
    -\lambda_1 & \mu_1\nu_1 & 0 & \nu_1 & \mu_1^2-1 & \lambda_1\mu_1 \\
    \mu_1 & \lambda_1\nu_1 & -\nu_1 & 0 & \lambda_1\mu_1 & \lambda_1^2-1 \\
    -\mu_1\nu_1 & -\lambda_1 & -\mu_1^2+1 & -\lambda_1\mu_1 & 0 & \nu_1 \\
    -\lambda_1\nu_1 & \mu_1 & -\lambda_1\mu_1 & -\lambda_1^2+1 & -\nu_1 & 0
  \end{array}\right)
   \left(\begin{array}{@{}c@{}}
    b_{11} \\
    b_{21} \\
    c_{11} \\
    c_{12} \\
    c_{21} \\
    c_{22}
  \end{array}\right) = \textbf{0}.\nonumber
\end{equation}
Since we have assumed that $\lambda_1 \neq 0$, the first row and the third row are linearly independent. Denote the $j$th row of the matrix as $\mathcal{R}_j$, then we have
\[
\frac{-\mu_1\nu_1}{\lambda_1}\cdot \mathcal{R}_1 + \frac{-\nu_1^2+1}{-\lambda_1}\cdot\mathcal{R}_3 = \mathcal{R}_2,
\]
\[
\frac{-\nu_1}{\lambda_1}\cdot\mathcal{R}_1 + \frac{-\mu_1}{\lambda_1}\cdot\mathcal{R}_3 = \mathcal{R}_4,
\]
\[
\frac{1-\mu_1^2}{\lambda_1}\cdot\mathcal{R}_1 + \frac{\mu_1\nu_1}{\lambda_1}\cdot\mathcal{R}_3 = \mathcal{R}_5,
\]
\[
-\mu_1\cdot\mathcal{R}_1 + \nu_1\cdot\mathcal{R}_3 = \mathcal{R}_6.
\]
So the solution of $\mathcal{S}_1$ is 4-dimensional, which is
\begin{align*}
    c_{11} &= \frac{\lambda_1^2+\mu_1^2}{\lambda_1}b_{21}+\frac{\mu_1}{\lambda_1}c_{12}-\frac{\mu_1\nu_1}{\lambda_1}c_{21}-\nu_1c_{22},\\
    b_{11} &= \frac{\mu_1\nu_1}{\lambda_1}b_{21} + \frac{\nu_1}{\lambda_1}c_{12} -\frac{\lambda_1^2+\nu_1^2}{\lambda_1}c_{21} + \mu_1c_{22},
\end{align*}
where $b_{21}, c_{12}, c_{21}, c_{22} \in \rn$.\smallskip

The system $\mathcal{S}_2$ includes the cases when $j, k\in\{4, 5, 6\}, j<k$, and $\alpha\in\{7, 8\}$.
\begin{equation}
  \left(\begin{array}{@{}cccccc@{}}
    0 & \nu_2^2-1 & \lambda_2 & -\mu_2 & \mu_2\nu_2 & \lambda_2\nu_2 \\
    -\nu_2^2+1 & 0 & -\mu_2\nu_2 & -\lambda_2\nu_2 & \lambda_2 & -\mu_2 \\
    -\lambda_2 & \mu_2\nu_2 & 0 & \nu_2 & \mu_2^2-1 & \lambda_2\mu_2 \\
    \mu_2 & \lambda_2\nu_2 & -\nu_2 & 0 & \lambda_2\mu_2 & \lambda_2^2-1 \\
    -\mu_2\nu_2 & -\lambda_2 & -\mu_2^2+1 & -\lambda_2\mu_2 & 0 & \nu_2 \\
    -\lambda_2\nu_2 & \mu_2 & -\lambda_2\mu_2 & -\lambda_2^2+1 & -\nu_2 & 0
  \end{array}\right)
   \left(\begin{array}{@{}c@{}}
    s_{14} \\
    s_{24} \\
    t_{14} \\
    t_{15} \\
    t_{24} \\
    t_{25}
  \end{array}\right) = \textbf{0}.\nonumber
\end{equation}
Similarly, since $\lambda_2 \neq 0$, $\mathcal{S}_1$ has the following 4-dimensional solution
\begin{align*}
    t_{14} &= \frac{\lambda_2^2+\mu_2^2}{\lambda_2}s_{24}+\frac{\mu_2}{\lambda_2}t_{15}-\frac{\mu_2\nu_2}{\lambda_2}t_{24}-\nu_2t_{25},\\
    s_{14} &= \frac{\mu_2\nu_2}{\lambda_2}s_{24} + \frac{\nu_2}{\lambda_2}t_{15} -\frac{\lambda_2^2+\nu_2^2}{\lambda_2}t_{24} + \mu_2t_{25},
\end{align*}
where $s_{24}, t_{15}, t_{24}, t_{25} \in \rn$.\smallskip

Since $\mathcal{S}_1$ and $\mathcal{S}_2$ are independent of each other, case (ii) thus the whole system of 72 equations has 8-dimensional solutions. Since in the structure equations of $\g$, $\rho$ can change freely in $\rn$, we have shown that for any fixed $J$ taking the form \eqref{matrixJ}, there exists exactly a 9-dimensional family of coefficients for $\g$ such that $(G, g, J)$ is a Hermitian manifold. The Lie bracket relations are given by \eqref{eq1} and the coefficients satisfy the following linear relations
\begin{align}
    c_{11} &= \frac{\lambda_1^2+\mu_1^2}{\lambda_1}b_{21}+\frac{\mu_1}{\lambda_1}c_{12}-\frac{\mu_1\nu_1}{\lambda_1}c_{21}-\nu_1c_{22},\nonumber\\
    b_{11} &= \frac{\mu_1\nu_1}{\lambda_1}b_{21} + \frac{\nu_1}{\lambda_1}c_{12} -\frac{\lambda_1^2+\nu_1^2}{\lambda_1}c_{21} + \mu_1c_{22},\nonumber\\
    t_{14} &= \frac{\lambda_2^2+\mu_2^2}{\lambda_2}s_{24}+\frac{\mu_2}{\lambda_2}t_{15}-\frac{\mu_2\nu_2}{\lambda_2}t_{24}-\nu_2t_{25},\nonumber\\
    s_{14} &= \frac{\mu_2\nu_2}{\lambda_2}s_{24} + \frac{\nu_2}{\lambda_2}t_{15} -\frac{\lambda_2^2+\nu_2^2}{\lambda_2}t_{24} + \mu_2t_{25},\nonumber
\end{align}
where $\rho, b_{21}, c_{12}, c_{21}, c_{22}, s_{24}, t_{15}, t_{24}, t_{25}\in \rn$.

\end{proof}

\chapter{Hermitian Semi-K\"ahler Manifolds}

In this chapter we look for the conditions when $G^8$ belongs to the class of Hermitian semi-K\"{a}hler manifolds i.e. $\mathcal{W}_3$. Since we have solved the integrability condition for $G^8$ in Chapter 3, it suffices to check when $\Tilde{\alpha}(Z) = 0$ for all tangent vectors $Z \in T_eG^8 \cong \g$. By the Koszul formula, for any $1 \leq j \leq 8$, we have
\begin{align*}
    &(\nabla_{e_j}\omega)(e_j, Z)\\ = &\;e_j(\omega(e_j, Z)) - \omega(\nabla_{e_j}e_j, Z) - \omega(e_j, \nabla_{e_j}Z)\\=&\;e_j(g(Je_j, Z)) - g(J\nabla_{e_j}e_j, Z) - g(Je_j, \nabla_{e_j}Z)\\=&\;g(\nabla_{e_j}Je_j, Z) + g(Je_j, \nabla_{e_j}Z) - g(J\nabla_{e_j}e_j, Z) - g(Je_j, \nabla_{e_j}Z)\\=&\;g(\nabla_{e_j}Je_{j} - J\nabla_{e_j}e_j, Z)\\
    = &\;g(\nabla_{e_j}Je_{j}, Z) + g(\nabla_{e_j}e_j, JZ)\\
    = &\;\frac{1}{2}\big(g([e_j, Je_j], Z) - g([e_j, Z], Je_j) - g([Je_j, Z], e_j) - 2g([e_j, JZ], e_j)\big),
\end{align*}
where we use the same notation for a vector in $\g$ and the left-invariant vector field generated by it since $\nabla \omega$ is tensorial. Hence for a basis vector $e_k$ of $\g$, we have
\[
\Tilde{\alpha}(e_k) = \frac{1}{2}\sum_{j=1}^8 \big(g([e_j, Je_j], e_k) - g([e_j, e_k], Je_j) - g([Je_j, e_k], e_j) - 2g([e_j, Je_k], e_j)\big),
\]
where $k \in \{1, \dots, 8\}$. By a standard calculation, we obtain the following results
\begin{align}
    \Tilde{\alpha}(e_1) &= -\frac{1}{2}(-\rho c_{12} + b_{11}c_{21} - b_{21}c_{11} - 4\lambda_1),\nonumber\\
    \Tilde{\alpha}(e_2) &= -\frac{1}{2}(\rho c_{11} + b_{11}c_{22} - b_{21}c_{12} + 4\mu_1),\nonumber\\
    \Tilde{\alpha}(e_3) &= -\frac{1}{2}(-\rho b_{11} + c_{11}c_{22} - c_{12}c_{21} - 4\nu_1),\nonumber\\
    \Tilde{\alpha}(e_4) &= -\frac{1}{2}(-\rho t_{15} + s_{14}t_{24} - s_{24}t_{14} - 4\lambda_2),\label{eq2}\\
    \Tilde{\alpha}(e_5) &= -\frac{1}{2}(\rho t_{14} + s_{14}t_{25} - s_{24}t_{15} + 4\mu_2),\nonumber\\
    \Tilde{\alpha}(e_6) &= -\frac{1}{2}(-\rho s_{14} + t_{14}t_{25} - t_{15}t_{24} - 4\nu_2),\nonumber\\
    \Tilde{\alpha}(e_7) &= 0,\nonumber\\
    \Tilde{\alpha}(e_8) &= 0\nonumber.
\end{align}
Recall that in \eqref{eq1}, the coefficients $\theta_j = \langle [X, Y], e_j\rangle$ are given by
\[
\left(\begin{array}{@{}c@{}}
\theta_1\\
\theta_2\\
\theta_3\\
\theta_4\\
\theta_5\\
\theta_6
  \end{array}\right)
=\frac{1}{2}
\left(\begin{array}{@{}c@{}}
-\rho c_{12} + b_{11}c_{21} - b_{21}c_{11}\\
\rho c_{11} + b_{11}c_{22} - b_{21}c_{12}\\
-\rho b_{11} + c_{11}c_{22} - c_{12}c_{21}\\
-\rho t_{15} + s_{14}t_{24} - s_{24}t_{14}\\
\rho t_{14} + s_{14}t_{25} - s_{24}t_{15}\\
-\rho s_{14} + t_{14}t_{25} - t_{15}t_{24}
  \end{array}\right).
\]
By comparing $\theta_j$ and \eqref{eq2}, we see that $G^8$ belongs to $\mathcal{W}_3$ if and only if
\begin{align}
    \theta_1 &= 2 \lambda_1, \theta_2 = -2 \mu_1, \theta_3 = 2 \nu_1,\nonumber\\
    \theta_4 &= 2 \lambda_2, \theta_5 = -2 \mu_2, \theta_6 = 2 \nu_2.\nonumber
\end{align}
Since $\lambda_j^2 + \mu_j^2 + \nu_j^2 = 1$, where $j = 1, 2$, at least two of $\theta_k$ must be nonzero. Therefore, for any $G^8$ in the class $\mathcal{W}_3$, we have
\[[X, Y] \notin \m,\]
i.e. the horizontal distribution is not integrable.\smallskip

Now we have the following system of equations for $G^8$ to be Hermitian semi-K\"{a}hler when $\lambda_1, \lambda_2 \neq 0$ and $\eta = 1$.
\begin{align}
-\rho c_{12} + b_{11}c_{21} - b_{21}c_{11} &= 4\lambda_1,\nonumber\\
    \rho c_{11} + b_{11}c_{22} - b_{21}c_{12} &= -4\mu_1,\nonumber\\
  -\rho b_{11} + c_{11}c_{22} - c_{12}c_{21} &= 4\nu_1,\nonumber\\
  (\lambda_1^2+\mu_1^2)b_{21}-\lambda_1 c_{11}+\mu_1 c_{12}-\mu_1\nu_1 c_{21}-\lambda_1\nu_1c_{22}&=0,\nonumber\\
    -\lambda_1 b_{11} + \mu_1\nu_1b_{21} + \nu_1 c_{12} -(\lambda_1^2+\nu_1^2)c_{21} + \lambda_1 \mu_1c_{22}&=0,\label{system of equations for W3}\\
    -\rho t_{15} + s_{14}t_{24} - s_{24}t_{14} &= 4\lambda_2,\nonumber\\
    \rho t_{14} + s_{14}t_{25} - s_{24}t_{15} &= -4\mu_2,\nonumber\\
    -\rho s_{14} + t_{14}t_{25} - t_{15}t_{24} &= 4\nu_2,\nonumber\\ (\lambda_2^2+\mu_2^2)s_{24} -\lambda_2 t_{14} +\mu_2 t_{15}-\mu_2\nu_2 t_{24}-\lambda_2 \nu_2t_{25}&=0,\nonumber\\
    -\lambda_2 s_{14} + \mu_2\nu_2 s_{24} + \nu_2 t_{15} -(\lambda_2^2+\nu_2^2)t_{24} + \lambda_2\mu_2t_{25} &= 0.\nonumber
\end{align}

\begin{theorem}
Let $G^8$ be a 8-dimensional Riemannian Lie group containing $K = \SU{2} \times \SU{2}$ as a Lie subgroup, where $K$ is endowed with the standard Riemannian metric induced by its Killing form. The Lie bracket relations are given by Proposition \ref{Turner equations}. Let $J$ be the left-invariant almost Hermitian structure on $G^8$ defined by $\eqref{matrixJ}$, where $\lambda_j^2 + \mu_j^2 + \nu_j^2 = 1$, $j = 1, 2$. Without loss of generality, we assume that $\lambda_1, \lambda_2 \neq 0$. Then $(G^8, g, J)$ belongs to the class $\mathcal{W}_3$ if and only if $\rho \neq 0$ and there exist $r_1, r_2 \in \rn$ such that
\[
    c_{12} = -\frac{4\lambda_1}{\rho},\;
    c_{11} = -\frac{4\mu_1}{\rho},\;
    b_{11} = -\frac{4\nu_1}{\rho},
    c_{22} = r_1\lambda_1,\;
    c_{21} = r_1\mu_1,\;
    b_{21} = r_1\nu_1,
\]
\[
    t_{15} = -\frac{4\lambda_2}{\rho},\;
    t_{14} = -\frac{4\mu_2}{\rho},\;
    s_{14} = -\frac{4\nu_2}{\rho},
    t_{25} = r_2\lambda_2,\;
    t_{24} = r_2\mu_2,\;
    s_{24} = r_2\nu_2.
\]
\label{theorem for W3}
\end{theorem}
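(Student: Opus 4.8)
The plan is to exploit the Gray--Hervella description from Chapter 2: a Hermitian manifold lies in $\mathcal{W}_3$ exactly when it is semi-K\"ahler, i.e. $\delta\omega=0$, which here means $\tilde\alpha\equiv 0$. Since integrability was already settled in Theorem \ref{integrability theorem} and the vanishing of $\tilde\alpha$ has been reduced to \eqref{system of equations for W3}, it suffices to solve that system; solving it completely yields the stated \emph{if and only if}. I would first observe that \eqref{system of equations for W3} splits into two independent subsystems, one in the coefficients $b_{jk},c_{jk}$ of the first $\su{2}$ factor and one in $s_{jk},t_{jk}$ of the second, coupled only through the shared parameter $\rho$. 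By symmetry it is enough to treat the first.

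The decisive step is a vector reformulation. For the first factor I would set $\mathbf p=(b_{11},c_{11},c_{12})$, $\mathbf q=(b_{21},c_{21},c_{22})$ and $\mathbf n=(\nu_1,\mu_1,\lambda_1)$, with $|\mathbf n|=1$. A direct check rewrites the three semi-K\"ahler equations as the single cross-product identity
\[
\mathbf p\times\mathbf q-\rho\,\mathbf p=4\,\mathbf n,
\]
and rewrites the two integrability equations as
\[
\mathbf p\times\mathbf n=\mathbf q-(\mathbf n\cdot\mathbf q)\,\mathbf n .
\]
Here only the first two components of the latter come directly from \eqref{system of equations for W3}; since both sides are automatically orthogonal to $\mathbf n$ and $\lambda_1\ne 0$, the third component is forced, so the full vector identity holds.

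I would then decompose $\mathbf p=a\mathbf n+\mathbf p_\perp$ and $\mathbf q=b\mathbf n+\mathbf q_\perp$ into parts parallel and orthogonal to $\mathbf n$. The integrability identity collapses to $\mathbf q_\perp=\mathbf p_\perp\times\mathbf n$, whence $\mathbf p_\perp\perp\mathbf q_\perp$ and $|\mathbf q_\perp|=|\mathbf p_\perp|$. Using the triple-product identities $\mathbf n\times(\mathbf p_\perp\times\mathbf n)=\mathbf p_\perp$ and $\mathbf p_\perp\times(\mathbf p_\perp\times\mathbf n)=-|\mathbf p_\perp|^2\mathbf n$, one finds $\mathbf p\times\mathbf q=a\mathbf p_\perp+b\mathbf q_\perp-|\mathbf p_\perp|^2\mathbf n$, so the semi-K\"ahler identity separates into its $\mathbf n$-component $|\mathbf p_\perp|^2+\rho a=-4$ and its orthogonal component $(a-\rho)\mathbf p_\perp+b\,\mathbf q_\perp=0$. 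If $\mathbf p_\perp\ne 0$, then $\mathbf p_\perp,\mathbf q_\perp$ are linearly independent, forcing $a=\rho$ and $b=0$, hence $|\mathbf p_\perp|^2+\rho^2=-4$, which is impossible. Therefore $\mathbf p_\perp=\mathbf q_\perp=0$, so $\mathbf p$ and $\mathbf q$ are multiples of $\mathbf n$; the $\mathbf n$-component gives $\rho a=-4$, so $\rho\ne 0$, $a=-4/\rho$, and $b=:r_1$ is free. Reading off coordinates recovers the stated formulas, and the second factor is identical with parameter $r_2$ and $\mathbf n=(\nu_2,\mu_2,\lambda_2)$.

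The main obstacle is discovering this vector reformulation: recognizing the semi-K\"ahler relations as a cross-product equation, seeing the integrability relations as $\mathbf p\times\mathbf n=\mathbf q-(\mathbf n\cdot\mathbf q)\mathbf n$, and justifying the recovery of the missing third component from $\lambda_1\ne 0$. Once the problem is cast in terms of $\mathbf p,\mathbf q,\mathbf n$, the triple-product bookkeeping and the sign obstruction $|\mathbf p_\perp|^2+\rho^2=-4$ make both the exclusion $\rho\ne 0$ and the explicit solution routine.
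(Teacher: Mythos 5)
Your proof is correct, and it shares the paper's overall strategy: both split \eqref{system of equations for W3} into the two independent subsystems, recast the three $\tilde{\alpha}$-equations as a single cross-product equation in $\rn^3$, decompose along the unit vector, and kill the non-degenerate case by the impossibility of $\lVert\mathbf{p}_\perp\rVert^2+\rho^2=-4$. Where you genuinely diverge is in how the two linear integrability equations are absorbed. The paper treats them as an inhomogeneous linear system with auxiliary constants $C_1,C_2$, writes the general solution as $\mathbf{u}=\mathbf{u}_0+l_1\mathbf{w}$, $\mathbf{v}=\mathbf{v}_0+r_1\mathbf{w}$ with explicitly computed particular solutions orthogonal to $\mathbf{w}$, and then verifies by coordinate computation the relations $\mathbf{u}_0\cdot\mathbf{v}_0=0$, $\mathbf{w}\times\mathbf{v}_0=\mathbf{u}_0$, $\lVert\mathbf{u}_0\rVert=\lVert\mathbf{v}_0\rVert$, $\mathbf{u}_0\times\mathbf{v}_0=-\lVert\mathbf{v}_0\rVert^2\,\mathbf{w}$ that drive the endgame. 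You instead read the integrability equations as the single intrinsic identity $\mathbf{p}\times\mathbf{n}=\mathbf{q}-(\mathbf{n}\cdot\mathbf{q})\mathbf{n}$, i.e. $\mathbf{q}_\perp=\mathbf{p}_\perp\times\mathbf{n}$; your recovery of the missing third component is sound, since both sides lie in $\mathbf{n}^\perp$ and the third coefficient of $\mathbf{n}$ is $\lambda_1\neq 0$. After that, every relation the paper checks by hand becomes an instance of the triple-product identities, so your middle section is coordinate-free: no constants $C_1,C_2$, no explicit particular solutions, and the orthogonality and norm relations come for free. What the paper's more computational route buys is an explicit parametrization of the solution set of the linear system along the way; what yours buys is brevity and a transparent explanation of why the paper's auxiliary vectors satisfy exactly the cross-product relations they do. The case analysis ($\mathbf{p}_\perp\neq 0$ versus $\mathbf{p}_\perp=0$, matching the paper's $\lVert\mathbf{v}_0\rVert\neq 0$ versus $\lVert\mathbf{v}_0\rVert=0$) and the conclusion, including the reading-off of the stated coefficient relations for both $\su2$ factors, are then identical.
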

\begin{remark}
By Theorem \ref{theorem for W3}, for a fixed matrix representation of the almost Hermitian structure $J$, there exists exactly a 3-dimensional family of the coefficients for $\g$ such that $G^8$ is Hermitian semi-K\"{a}hler. The space of solutions to the problem forms a fibre bundle over $\mathbb{S}^2 \times \mathbb{S}^2$ with fibre $\rn^* \times \rn^2$.
\end{remark}
\begin{proof}
Notice that for a fixed value of $\rho$, the equations in \eqref{system of equations for W3} that involves $b_{jk}, c_{jk}$ are independent of those involving $s_{jk}, t_{jk}$. Furthermore, these two subsystems have the same structure if we replace $b_{11}, b_{21}, c_{11}, c_{12}, c_{21}, c_{22}, \lambda_1, \mu_1, \nu_1$ by $s_{14}, s_{24}, t_{14}, t_{15}, t_{24},$

\noindent $ t_{25}, \lambda_2, \mu_2, \nu_2$, respectively. So it suffices to solve the system of the equations about $b_{jk}, c_{jk}$. The subsystem of equations is given by
\begin{align*}
    -\rho c_{12} + b_{11}c_{21} - b_{21}c_{11} &= 4\lambda_1,\\
    \rho c_{11} + b_{11}c_{22} - b_{21}c_{12} &= -4\mu_1,\\
  -\rho b_{11} + c_{11}c_{22} - c_{12}c_{21} &= 4\nu_1,\\
  (\lambda_1^2+\mu_1^2)b_{21}-\lambda_1 c_{11}+\mu_1 c_{12}-\mu_1\nu_1 c_{21}-\lambda_1\nu_1c_{22}&=0,\\
    -\lambda_1 b_{11} + \mu_1\nu_1b_{21} + \nu_1 c_{12} -(\lambda_1^2+\nu_1^2)c_{21} + \lambda_1 \mu_1c_{22}&=0.
\end{align*}
Denote $(c_{12}, -c_{11}, b_{11})^\top$, $(c_{22}, -c_{21}, b_{21})^\top$, $(\lambda_1, -\mu_1, \nu_1)^\top$ by $\mathbf{u}$, $\mathbf{v}$, $\mathbf{w}$, respectively. Then the system of equations can be rewritten as
\begin{equation}
-\rho \mathbf{u} + \mathbf{u}\times\mathbf{v} = 4\mathbf{w},\label{W3main}
\end{equation}
\[
\begin{pmatrix}\mu_1 \\ \lambda_1 \\ 0\end{pmatrix}\cdot\mathbf{u} + \begin{pmatrix}-\lambda_1 \nu_1 \\ \mu_1\nu_1 \\ \lambda_1^2 + \mu_1^2\end{pmatrix}\cdot\mathbf{v} = 0,
\]
\[
\begin{pmatrix}\nu_1 \\ 0 \\ -\lambda_1\end{pmatrix}\cdot\mathbf{u} + \begin{pmatrix}\lambda_1 \mu_1 \\ \lambda_1^2 + \nu_1^2 \\ \mu_1 \nu_1\end{pmatrix}\cdot\mathbf{v} = 0.
\]
Substitute $\mathbf{u}, \mathbf{v}$ by $(x_1, y_1, z_1)$, $(x_2, y_2, z_2)$, respectively. Then the system of the last two equations is equivalent to
\begin{align*}
\mu_1 x_1 + \lambda_1 y_1 &= C_1,\\
\nu_1 x_1 - \lambda_1 z_1 &= C_2,\\
-\lambda_1 \nu_1 x_2 + \mu_1\nu_1 y_2 + (\lambda_1^2 + \mu_1^2) z_2 &= -C_1,\\
\lambda_1 \mu_1 x_2 + (\lambda_1^2 + \nu_1^2) y_2 + \mu_1 \nu_1 z_2 &= -C_2, 
\end{align*}
where $C_1, C_2 \in \rn$. 
The general solution to the first two equations is
\begin{equation}
	\begin{pmatrix}
		x_1\\
		y_1\\
		z_1
	\end{pmatrix}
	= l_1
	\begin{pmatrix}
		\lambda_1\\
		-\mu_1\\
		\nu_1
	\end{pmatrix} = l_1 \mathbf{w},\label{general_solution_1}
\end{equation}
where $l_1 \in \rn$. By substitution, we see that this is also the general solution to the last two equations, i.e.
\begin{equation}
	\begin{pmatrix}
		x_2\\
		y_2\\
		z_2
	\end{pmatrix}
	= r_1
	\begin{pmatrix}
		\lambda_1\\
		-\mu_1\\
		\nu_1
	\end{pmatrix} = r_1 \mathbf{w},\label{general_solution_2}
\end{equation}
where $r_1 \in \rn$. Therefore, we have
\begin{align*}
    \mathbf{u} &= \mathbf{u}_0 + l_1 \mathbf{w},\\
    \mathbf{v} &= \mathbf{v}_0 + r_1 \mathbf{w},
\end{align*}
where $l_1, r_1 \in \rn$ and $\mathbf{u}_0, \mathbf{v}_0$ are special solutions to the system for fixed $C_1, C_2$. Especially, we can choose $\mathbf{u}_0, \mathbf{v}_0$ such that they are orthogonal to $\mathbf{w}$, i.e. $\mathbf{u}_0$ is the solution to
\begin{align*}
    \mu_1 x_1 + \lambda_1 y_1 &= C_1,\\
    \nu_1 x_1 - \lambda_1 z_1 &= C_2,\\
    \lambda_1 x_1 - \mu_1 y_1 + \nu_1 z_1 &= 0,
\end{align*}
and $\mathbf{v}_0$ is the solution to
\begin{align*}
    -\lambda_1 \nu_1 x_2 + \mu_1\nu_1 y_2 + (\lambda_1^2 + \mu_1^2) z_2 &= -C_1,\\
    \lambda_1 \mu_1 x_2 + (\lambda_1^2 + \nu_1^2) y_2 + \mu_1 \nu_1 z_2 &= -C_2,\\
    \lambda_1 x_2 - \mu_1 y_2 + \nu_1 z_2 &= 0.
\end{align*}
Solving the two systems gives us
\begin{equation}
    \mathbf{u}_0 = \begin{pmatrix}
    \nu_1 C_1 + \mu_1 C_2\\[0.3em]
    -\frac{1}{\lambda_1}(\mu_1 \nu_1 C_1 + \mu_1^2 C_2 - C_2)\\[0.3em]
    \frac{1}{\lambda_1}(\nu_1^2 C_1 - C_1 + \mu_1 \nu_1 C_2)
    \end{pmatrix}
\end{equation}
and
\begin{equation}
    \mathbf{v}_0 = \begin{pmatrix}
    \frac{1}{\lambda_1}(-\mu_1 C_1 + \nu_1 C_2)\\[0.3em]
    -C_1\\[0.3em]
    -C_2
    \end{pmatrix}.\label{norm_v0}
\end{equation}
By a direct calculation, we see that
\[
\mathbf{u}_0 \cdot \mathbf{v}_0 = 0.
\]
Hence, $\mathbf{u}_0, \mathbf{v}_0, \mathbf{w}$ are pairwise orthogonal to each other. Furthermore, we have
\[
\mathbf{w} \times \mathbf{v}_0 = 
\begin{vmatrix}
\mathbf{e}_1 & \mathbf{e}_2 & \mathbf{e}_3 \\
\lambda_1    & -\mu_1       & \nu_1\\
 \frac{1}{\lambda_1}(-\mu_1 C_1 + \nu_1 C_2)
 & -C_1 & -C_2
\end{vmatrix} = \mathbf{u}_0.
\]
Since $\mathbf{w}$ and $\mathbf{v}_0$ are orthogonal and $\lVert\mathbf{w}\rVert = 1$, we have
\[
\lVert\mathbf{u}_0\rVert = \lVert\mathbf{v}_0\rVert.
\]
Therefore, the basis $[\mathbf{w}, \mathbf{v}_0, \mathbf{u}_0]$ is positively oriented and
\[
\mathbf{u}_0 \times \mathbf{w} = \mathbf{v}_0. 
\]
Moreover, since $\lVert\mathbf{u}_0\rVert = \lVert\mathbf{v}_0\rVert$ and $\lVert\mathbf{w}\rVert = 1$, we have
\[
    \mathbf{u}_0 \times \mathbf{v}_0 = -\lVert\mathbf{v}_0\rVert^2 \;\mathbf{w}.
\]
Using these results, for \eqref{W3main}, we have
\begin{align}
&-\rho \mathbf{u} + \mathbf{u}\times\mathbf{v} = 4\mathbf{w}\nonumber\\ \iff &-\rho (\mathbf{u}_0 + l_1 \mathbf{w}) + (\mathbf{u}_0 + l_1 \mathbf{w})\times(\mathbf{v}_0 + r_1 \mathbf{w}) = 4\mathbf{w}\nonumber\\
\iff &-\rho (\mathbf{u}_0 + l_1 \mathbf{w}) + \mathbf{u}_0 \times \mathbf{v}_0 + r_1 \mathbf{u}_0 \times \mathbf{w} + l_1
\mathbf{w} \times \mathbf{v}_0 = 4 \mathbf{w}\nonumber\\
\iff &-\rho (\mathbf{u}_0 + l_1 \mathbf{w}) -\lVert\mathbf{v}_0\rVert^2 \;\mathbf{w} + r_1 \mathbf{v}_0 + l_1 \mathbf{u}_0 = 4 \mathbf{w}\nonumber\\
\iff \;&(4 + \rho l_1 +\lVert\mathbf{v}_0\rVert^2) \mathbf{w} - r_1 \mathbf{v}_0 + (\rho - l_1)\mathbf{u}_0 = 0.\label{simplified_main_W3}
\end{align}
When $\lVert\mathbf{v}_0\rVert \neq 0$, the vectors $\mathbf{w}, \mathbf{v}_0, \mathbf{u}_0$ form a basis for $\rn^3$. Hence, the equation is equivalent to
\begin{align*}
    4 + \rho l_1 + \lVert\mathbf{v}_0\rVert^2 &= 0,\\
    -r_1 &= 0,\\
    \rho - l_1 &= 0.
\end{align*}
Substituting the third equation $l_1 = \rho$ into the first one, we see that
\[
4 + \rho^2 + \lVert\mathbf{v}_0\rVert^2 > 0.
\]
Hence, the system has no solution when $\lVert\mathbf{v}_0\rVert \neq 0$. It suffices to consider the case when $\lVert\mathbf{v}_0\rVert = 0$. By \eqref{norm_v0}, this is equivalent to
\[
C_1 = C_2 = 0.
\]
In this situation, the system is solved by the general solutions \eqref{general_solution_1} and \eqref{general_solution_2}. By \eqref{simplified_main_W3}, we have
\begin{align*}
    -\rho \mathbf{u} + \mathbf{u}\times\mathbf{v} = 4\mathbf{w} &\iff (4 + \rho l_1)\mathbf{w} = 0\\
    &\iff 4 + \rho l_1 = 0.
\end{align*}
Therefore, the system of equations has solutions if and only if $\rho \neq 0$, and the solutions are given by setting $l_1 = -\frac{4}{\rho}$ in the general solution \eqref{general_solution_1}, i.e.
\[
    x_1 = -\frac{4\lambda_1}{\rho},\;
    y_1 = \frac{4\mu_1}{\rho},\;
    z_1 = -\frac{4\nu_1}{\rho},
    x_2 = r_1\lambda_1,\;
    y_2 = -r_1\mu_1,\;
    z_2 = r_1\nu_1,
\]
where $\rho \in \rn^*$ and $r_1 \in \rn$.\smallskip

As we have addressed before, the same argument can be applied to the other part of the system which involves $s_{jk}, t_{lm}$. Therefore, by the above discussion, we have proven that $G^8$ belongs to $\mathcal{W}_3$ if and only if
\[
    c_{12} = -\frac{4\lambda_1}{\rho},\;
    c_{11} = -\frac{4\mu_1}{\rho},\;
    b_{11} = -\frac{4\nu_1}{\rho},
    c_{22} = r_1\lambda_1,\;
    c_{21} = r_1\mu_1,\;
    b_{21} = r_1\nu_1,
\]
\[
    t_{15} = -\frac{4\lambda_2}{\rho},\;
    t_{14} = -\frac{4\mu_2}{\rho},\;
    s_{14} = -\frac{4\nu_2}{\rho},
    t_{25} = r_2\lambda_2,\;
    t_{24} = r_2\mu_2,\;
    s_{24} = r_2\nu_2,
\]
where $\rho \in \rn^*$, $r_1, r_2 \in \rn$. Thus the solution to this problem is 3-dimensional.

\end{proof}

\chapter{Locally Conformal K\"ahler Manifolds}

In this chapter, we investigate the conditions when $G^8$ belongs to the class $\mathcal{W}_4$. By the tensoriality of $\mu$ and the left translations on $G^8$, it suffices to check the condition for any basis vectors $e_j, e_k, e_l$ in the orthonormal basis of $\g$.\smallskip

We define the function $\psi$ by
\begin{align*}
    \psi(j, k, l) = \,&\alpha(e_j, e_k, e_l) + \frac{1}{2(n - 1)}(g(e_j, e_k)\Tilde{\alpha}(e_l)-g(e_j, e_l)\Tilde{\alpha}(e_k)\\&-g(e_j, Je_k)\Tilde{\alpha}(Je_l)+g(e_j, Je_l)\Tilde{\alpha}(Je_k)).
\end{align*}
The Hermitian manifold $G^8$ belongs to $\mathcal{W}_4$ if and only if $\psi(j, k, l)$ vanishes for any $1 \leq j, k, l \leq 8$. By a direct calculation, we have:
\begin{align*}
    \psi(8, 8, 1) &= \frac{1}{3}\theta_1 + \frac{\lambda_1}{3},\,
    \psi(8, 8, 2) = \frac{1}{3}\theta_2 - \frac{\mu_1}{3},\,
    \psi(8, 8, 3) = \frac{1}{3}\theta_3 + \frac{\nu_1}{3},\\
    \psi(8, 8, 4) &= \frac{1}{3}\theta_4 + \frac{\lambda_2}{3},\,
    \psi(8, 8, 5) = \frac{1}{3}\theta_5 - \frac{\mu_2}{3},\,
    \psi(8, 8, 6) = \frac{1}{3}\theta_6 + \frac{\nu_2}{3}.
\end{align*}
Therefore, if $G^8$ belongs to $\mathcal{W}_4$, we must have
\begin{equation}
    \theta_1 = -\lambda_1,\, \theta_2 = \mu_1,\, \theta_3 = -\nu_1,\, \theta_4 = -\lambda_2,\, \theta_5 = \mu_2,\, \theta_6 = -\nu_2.\label{W4eq}
\end{equation}
Substituting $\theta_j$ by \eqref{W4eq}, we have
\[
\psi(1, 1, 4) = \frac{1}{2}\lambda_2(1 - \lambda_1^2),\,\,\psi(1, 1, 5) = \frac{1}{2}\mu_2(\lambda_1^2 - 1),\,\,\psi(1, 1, 6) = \frac{1}{2}\nu_2(1 - \lambda_1^2),
\]
\[
\psi(4, 4, 1) = \frac{1}{2}\lambda_1(1 - \lambda_2^2),\,\,\psi(4, 4, 2) = \frac{1}{2}\mu_1(\lambda_2^2 - 1),\,\,\psi(4, 4, 3) = \frac{1}{2}\nu_1(1 - \lambda_2^2).
\]
Since $\lambda_j^2 + \mu_j^2 + \nu_j^2 = 1$, we must have
\[
\lambda_1^2 = 1, \,\, \lambda_2^2 = 1, \,\,
\mu_1 = \nu_1 = 0,\,\,
\mu_2 = \nu_2 = 0.
\]
However, substituting $\mu_j, \nu_j$ by 0, we have
\[
\psi(2, 3, 4) = \frac{\lambda_1 \lambda_2}{2} \neq 0.
\]
Hence, $G^8$ never belongs to $\mathcal{W}_4$.

\appendix
\chapter{The Maple Programme for $G^8$}

In this appendix, we give a brief account of the Maple programme that we use to check our computations. First, we set up the environment.

\begin{verbatim}
	restart;
	with(DifferentialGeometry);
	with(LieAlgebras);
	with(LinearAlgebra);
	with(ArrayTools);
\end{verbatim}	

Then we import the structure equations of $G^8$ from the paper \cite{Tur} to the program.

\begin{verbatim}
	BasisVectors := [A, B, C, R, S, T, X, Y]:
	
	StructureEquations := [
	[A, B] = 2*C, [C, A] = 2*B, [B, C] = 2*A,
	[R, S] = 2*T, [T, R] = 2*S, [S, T] = 2*R,
	[A, X] = -B*b[11] - C*c[11], 
	[A, Y] = -B*b[21] - C*c[21], 
	[B, X] = A*b[11] - C*c[12], 
	[B, Y] = A*b[21] - C*c[22], 
	[C, X] = A*c[11] + B*c[12], 
	[C, Y] = A*c[21] + B*c[22], 
	[R, X] = -S*s[14] - T*t[14], 
	[R, Y] = -S*s[24] - T*t[24], 
	[S, X] = R*s[14] - T*t[15], 
	[S, Y] = R*s[24] - T*t[25], 
	[T, X] = R*t[14] + S*t[15], 
	[T, Y] = R*t[24] + S*t[25], 
	[X, Y] = rho*X 
	         + (1/2)*(-rho*c[12] + b[11]*c[21] - b[21]*c[11])*A 
	         + (1/2)*(rho*c[11] + b[11]*c[22] - b[21]*c[12])*B 
	         + (1/2)*(-rho*b[11] + c[11]*c[22] - c[12]*c[21])*C 
	         + (1/2*(-rho*t[15] + s[14]*t[24] - s[24]*t[14])*R 
	         + (1/2)*(rho*t[14] + s[14]*t[25] - s[24]*t[15])*S 
	         + (1/2)*(-rho*s[14] + t[14]*t[25] - t[15]*t[24])*T]:
\end{verbatim}	

Initialize the equations for checking the integrability. The parameters $$\lambda_1, \mu_1, \nu_1, \lambda_2, \mu_2, \nu_2$$ are replaced by $$x_1, y_1, z_1, x_2, y_2, z_2$$ for convenience.

\begin{verbatim}
	LAD := LieAlgebraData(StructureEquations, BasisVectors, alg):
	DGsetup(LAD):
	N := numelems(BasisVectors);
	ProcBasis := [cat(e, 1 .. N)];
	NJ := array(1 .. 8, 1 .. 8, 1 .. 8);
	Equs := {x1^2 + y1^2 + z1^2 = 1, x2^2 + y2^2 + z2^2 = 1};
	
\end{verbatim}

The function con() converts a data from the form of a vector (in LinearAlgebra) to a vector (in LieAlgebra) in $\g$.

\begin{verbatim}
	con := proc(V) 
	  local a, i; 
	  a := 0; 
	  for i to N do 
	    a := a + V[i]*ProcBasis[i];
	  end do; 
	end:
\end{verbatim}

The following procedure computes the Nijenhuis tensor for given vectors u, v and a given almost complex structure $J$.

\begin{verbatim}
	Nijenhuis := proc(u, v, index_u, index_v) 
	  local i, result, Ju, Jv, U, V, Juv, uJv, JJuv, JuJv; 
	  global Equs, NJ; 
	  result := 0; 
	  U := Vector(GetComponents(u, ProcBasis)); 
	  V := Vector(GetComponents(v, ProcBasis)); 
	  result := result + LieBracket(u, v); 
	  Ju := con(MatrixVectorMultiply(J, U)); 
	  Jv := con(MatrixVectorMultiply(J, V)); 
	  result := result - LieBracket(Ju, Jv); 
	  Juv := Vector(GetComponents(LieBracket(Ju, v), ProcBasis)); 
	  JJuv := con(MatrixVectorMultiply(J, Juv)); 
	  uJv := Vector(GetComponents(LieBracket(u, Jv), ProcBasis)); 
	  JuJv := con(MatrixVectorMultiply(J, uJv)); 
	  result := result + JJuv + JuJv; 
	  if (result != 0) then 
	    for i to N do 
	      if (GetComponents(result, ProcBasis)[i] != 0) then 
	        print(u, v, i); 
	        print(GetComponents(result, ProcBasis)[i]); 
	        Equs := Equs union {GetComponents(result, ProcBasis)[i] = 0}; 
	        NJ[index_u, index_v, i] := GetComponents(result, ProcBasis)[i]; 
	        end if; 
	    end do; 
	  end if; 
	end proc:
\end{verbatim}

The procedure CheckIntegrability() lists the result of Nijenhuis() and we use the output to analyze the symmetry in the Nijenhuis tensor.

\begin{verbatim}
	CheckIntegrability := proc() 
	  local i, j; 
	  for i to N - 1 do 
	    for j from i + 1 to N do 
	      Nijenhuis(ProcBasis[i], ProcBasis[j], i, j); 
	    end do; 
	  end do; 
	end proc:
\end{verbatim}

We import the result by Magnin to our program and run the procedure CheckIntegrability().

\begin{verbatim}
	xi := 0;
	eta := 1;
	J := <
	<0, -z1, -y1, -x1*x2, x1*y2, -x1*z2, 0, 0> |
	<z1, 0, -x1, y1*x2, -y1*y2, y1*z2, 0, 0>   | 
	<y1, x1, 0, -z1*x2, z1*y2, -z1*z2, 0, 0>   | 
	<x1*x2, -y1*x2, z1*x2, 0, -z2, -y2, 0, 0>  | 
	<-x1*y2, y1*y2, -z1*y2, z2, 0, -x2, 0, 0>  | 
	<x1*z2, -y1*z2, z1*z2, y2, x2, 0, 0, 0>    | 
	<0, 0, 0, 0, 0, 0, 0, 1> | 
	<0, 0, 0, 0, 0, 0, -1, 0>>:
	CheckIntegrability();
\end{verbatim}

Now we define a function Nabla() corresponding to $\alpha(e_j, e_k, e_l)$ in the thesis.

\begin{verbatim}
	Nabla := proc(index_u, index_v, index_w) 
	  local result, u, v, w, U, V, W, JU, JV, JW, Ju, Jv, Jw, UJV, UW,
	  JVW, UV, UJW, VJW; 
	  global Equs, NJ; 
	  result := 0; 
	  u := ProcBasis[index_u]; 
	  v := ProcBasis[index_v]; 
	  w := ProcBasis[index_w]; 
	  U := Vector(GetComponents(u, ProcBasis));
	  V := Vector(GetComponents(v, ProcBasis)); 
	  W := Vector(GetComponents(w, ProcBasis)); 
	  JU := MatrixVectorMultiply(J, U); 
	  JV := MatrixVectorMultiply(J, V); 
	  JW := MatrixVectorMultiply(J, W); 
	  Ju := con(JU); 
	  Jv := con(JV); 
	  Jw := con(JW); 
	  UJV := Vector(GetComponents(LieBracket(u, Jv), ProcBasis)); 
	  UW := Vector(GetComponents(LieBracket(u, w), ProcBasis)); 
	  JVW := Vector(GetComponents(LieBracket(Jv, w), ProcBasis)); 
	  UV := Vector(GetComponents(LieBracket(u, v), ProcBasis)); 
	  UJW := Vector(GetComponents(LieBracket(u, Jw), ProcBasis)); 
	  VJW := Vector(GetComponents(LieBracket(v, Jw), ProcBasis)); 
	  result := 1/2*result 
	  + 1/2*DotProduct(UJV, W, conjugate = false) 
	  - 1/2*DotProduct(UW, JV, conjugate = false) 
	  - 1/2*DotProduct(JVW, U, conjugate = false) 
	  + 1/2*DotProduct(UV, JW, conjugate = false) 
	  - 1/2*DotProduct(UJW, V, conjugate = false) 
	  - 1/2*DotProduct(VJW, U, conjugate = false); 
	  return result; 
	end:
\end{verbatim}

The function NablaOmegajjk() calculates $\nabla_{e_j} \omega(e_j, e_k)$.

\begin{verbatim}
	NablaOmegajjk := proc(u, v) 
	  local result, U, V, JU, JV, Ju, Jv, UJV, UJU, JUV, UV; 
	  result := 0; 
	  U := Vector(GetComponents(u, ProcBasis)); 
	  V := Vector(GetComponents(v, ProcBasis)); 
	  JU := MatrixVectorMultiply(J, U); 
	  JV := MatrixVectorMultiply(J, V); 
	  Ju := con(JU); Jv := con(JV); 
	  UJV := Vector(GetComponents(LieBracket(u, Jv), ProcBasis)); 
	  UJU := Vector(GetComponents(LieBracket(u, Ju), ProcBasis)); 
	  JUV := Vector(GetComponents(LieBracket(Ju, v), ProcBasis)); 
	  UV := Vector(GetComponents(LieBracket(u, v), ProcBasis)); 
	  result := (1/2)*DotProduct(UJU, V, conjugate = false) 
	            - (1/2)*DotProduct(UV, JU, conjugate = false) 
	            - (1/2)*DotProduct(JUV, U, conjugate = false) 
	            - DotProduct(UJV, U, conjugate = false); 
	  return result; 
	end:
\end{verbatim}

Then we can define $\bar{\alpha}(e_k)$ for basis vectors $e_k$ in the thesis.

\begin{verbatim}
	BarAlphaek := proc(v) 
	  local j, result; 
	  result := 0; 
	  for j to N do 
	    result := result + NablaOmegajjk(ProcBasis[j], v); 
	  end do; 
	  return result; 
	end:
\end{verbatim}

And we can calculate $\bar{\alpha}(v)$ for any $v \in \g$.

\begin{verbatim}
	BarAlpha := proc(v) 
	  local j, V, result; 
	  global N; 
	  result := 0; 
	  V := Vector(GetComponents(v, ProcBasis)); 
	  for j to N do 
	    result := result + V[j]*BarAlphaek(ProcBasis[j]); 
	  end do; 
	  return result; 
	end:
\end{verbatim}

The function $\psi(j, k, l)$ in Chapter 5 in the thesis is calculated by the following programme.

\begin{verbatim}
	W4 := proc(index_u, index_v, index_w) 
	  local result, u, v, w, U, V, W, JV, JW, Jv, Jw; 
	  global Equs, NJ, N; 
	  result := 0; 
	  u := ProcBasis[index_u]; 
	  v := ProcBasis[index_v]; 
	  w := ProcBasis[index_w]; 
	  U := Vector(GetComponents(u, ProcBasis)); 
	  V := Vector(GetComponents(v, ProcBasis)); 
	  W := Vector(GetComponents(w, ProcBasis)); 
	  JV := MatrixVectorMultiply(J, V); 
	  JW := MatrixVectorMultiply(J, W); 
	  Jv := con(JV); 
	  Jw := con(JW); 
	  result := Nabla(index_u, index_v, index_w) 
	  + (DotProduct(U, V, conjugate = false)*BarAlpha(w) 
	  - DotProduct(U, W, conjugate = false)*BarAlpha(v) 
	  - DotProduct(U, JV, conjugate = false)*BarAlpha(Jw) 
	  + DotProduct(U, JW, conjugate = false)*BarAlpha(Jv))/(N - 2);
	  return result; 
	end:
\end{verbatim} 

Finally, the following procedure gives the condition for $G^8 \in \mathcal{W}_4$.

\begin{verbatim}
	CheckW4 := proc()
	  local j, k, l; 
	  for j to N do 
	    for k to N do 
	      for l to N do 
	        print(j, k, l); 
	        print(W4(j, k, l)); 
	      end do; 
	    end do; 
	  end do; 
	end:
\end{verbatim}

\backcover
\end{document}